\documentclass[12pt,a4paper]{amsart}

\usepackage{amssymb}
\usepackage[numeric, abbrev, nobysame]{amsrefs}
\usepackage{amscd}

\setlength{\textwidth}{418pt}
\setlength{\oddsidemargin}{17.5pt}
\setlength{\evensidemargin}{17.5pt}

\def\frak{\mathfrak}
\def\Bbb{\mathbb}
\def\Cal{\mathcal}

\let\phi\varphi

\newcommand{\x}{\times}
\renewcommand{\o}{\circ}

\newcommand{\al}{\alpha}

\newcommand{\la}{\lambda}

\newcommand{\ph}{\phi}

\renewcommand{\th}{\theta}
\newcommand{\si}{\sigma}

\newcommand{\La}{\Lambda}
\newcommand{\Ph}{\Phi}
\newcommand{\Ps}{\Psi}

\newcommand{\Si}{\Sigma}

\newcommand{\im}{\operatorname{im}}

\newcommand{\gr}{\operatorname{gr}}
\newcommand{\id}{\operatorname{id}}

\newcommand{\tcf}{{\widetilde{\Cal F}}}

\newcounter{theorem}
\numberwithin{theorem}{section}
\numberwithin{equation}{section}

\newtheorem{thm}[theorem]{Theorem}
\newtheorem*{thm*}{Theorem \thesubsection}
\newtheorem{lemma}[theorem]{Lemma}
\newtheorem{prop}[theorem]{Proposition}
\newtheorem{cor}[theorem]{Corollary}
\newtheorem*{lemma*}{Lemma \thesubsection}
\newtheorem*{prop*}{Proposition \thesubsection}
\newtheorem*{cor*}{Corollary \thesubsection}

\theoremstyle{definition}
\newtheorem{definition}[theorem]{Definition}
\newtheorem*{definition*}{Definition \thesubsection}
\newtheorem{example}[theorem]{Example}
\newtheorem*{example*}{Example \thesubsection}
\theoremstyle{remark}

\newtheorem*{remark*}{Remark \thesubsection}

\newcommand\dxd{%
\begin{picture}(46,10)\put(5,3){\line(1,0){36}}%
\put(3,3){\makebox(0,0){$\o$}}\put(23,3){\makebox(0,0){$\times$}}%
\put(43,3){\makebox(0,0){$\o$}}\end{picture}}

\newcommand\xxd[3]{%
\begin{picture}(46,12)\put(3,3){\line(1,0){38}}%
\put(3,3){\makebox(0,0){$\times$}}\put(23,3){\makebox(0,0){$\times$}}%
\put(43,3){\makebox(0,0){$\o$}}%
\put(3,10){\makebox(0,0){\scriptsize $#1$}}%
\put(23,10){\makebox(0,0){\scriptsize $#2$}}%
\put(43,10){\makebox(0,0){\scriptsize $#3$}}\end{picture}}

\newcommand\wxxd[3]{%
\begin{picture}(86,12)\put(3,3){\line(1,0){78}}%
\put(3,3){\makebox(0,0){$\times$}}\put(43,3){\makebox(0,0){$\times$}}%
\put(83,3){\makebox(0,0){$\o$}}%
\put(3,10){\makebox(0,0){\scriptsize $#1$}}%
\put(43,10){\makebox(0,0){\scriptsize $#2$}}%
\put(83,10){\makebox(0,0){\scriptsize $#3$}}\end{picture}}

\newcommand\waxxd[3]{%
\begin{picture}(76,12)\put(3,3){\line(1,0){68}}%
\put(3,3){\makebox(0,0){$\times$}}\put(33,3){\makebox(0,0){$\times$}}%
\put(73,3){\makebox(0,0){$\o$}}%
\put(3,10){\makebox(0,0){\scriptsize $#1$}}%
\put(33,10){\makebox(0,0){\scriptsize $#2$}}%
\put(73,10){\makebox(0,0){\scriptsize $#3$}}\end{picture}}

\newcommand\wbxxd[3]{%
\begin{picture}(66,12)\put(3,3){\line(1,0){58}}%
\put(3,3){\makebox(0,0){$\times$}}\put(43,3){\makebox(0,0){$\times$}}%
\put(63,3){\makebox(0,0){$\o$}}%
\put(3,10){\makebox(0,0){\scriptsize $#1$}}%
\put(43,10){\makebox(0,0){\scriptsize $#2$}}%
\put(63,10){\makebox(0,0){\scriptsize $#3$}}\end{picture}}

\newcommand\wcxxd[3]{%
\begin{picture}(56,12)\put(3,3){\line(1,0){48}}%
\put(3,3){\makebox(0,0){$\times$}}\put(33,3){\makebox(0,0){$\times$}}%
\put(53,3){\makebox(0,0){$\o$}}%
\put(3,10){\makebox(0,0){\scriptsize $#1$}}%
\put(33,10){\makebox(0,0){\scriptsize $#2$}}%
\put(53,10){\makebox(0,0){\scriptsize $#3$}}\end{picture}}

\newcommand\Wxxd[3]{%
\begin{picture}(106,12)\put(3,3){\line(1,0){98}}%
\put(3,3){\makebox(0,0){$\times$}}\put(63,3){\makebox(0,0){$\times$}}%
\put(103,3){\makebox(0,0){$\o$}}%
\put(3,10){\makebox(0,0){\scriptsize $#1$}}%
\put(63,10){\makebox(0,0){\scriptsize $#2$}}%
\put(103,10){\makebox(0,0){\scriptsize $#3$}}\end{picture}}

\newcommand\xdd[3]{%
\begin{picture}(46,12)\put(3,3){\line(1,0){18}}\put(25,3){\line(1,0){16}}%
\put(3,3){\makebox(0,0){$\times$}}\put(23,3){\makebox(0,0){$\o$}}%
\put(43,3){\makebox(0,0){$\o$}}%
\put(3,10){\makebox(0,0){\scriptsize $#1$}}%
\put(23,10){\makebox(0,0){\scriptsize $#2$}}%
\put(43,10){\makebox(0,0){\scriptsize $#3$}}\end{picture}}

\newcommand\wxdd[3]{%
\begin{picture}(66,12)\put(3,3){\line(1,0){38}}\put(45,3){\line(1,0){16}}%
\put(3,3){\makebox(0,0){$\times$}}\put(43,3){\makebox(0,0){$\o$}}%
\put(63,3){\makebox(0,0){$\o$}}%
\put(3,10){\makebox(0,0){\scriptsize $#1$}}%
\put(43,10){\makebox(0,0){\scriptsize $#2$}}%
\put(63,10){\makebox(0,0){\scriptsize $#3$}}\end{picture}}

\newcommand\awxdd[3]{%
\begin{picture}(76,12)\put(3,3){\line(1,0){38}}\put(45,3){\line(1,0){26}}%
\put(3,3){\makebox(0,0){$\times$}}\put(43,3){\makebox(0,0){$\o$}}%
\put(73,3){\makebox(0,0){$\o$}}%
\put(3,10){\makebox(0,0){\scriptsize $#1$}}%
\put(43,10){\makebox(0,0){\scriptsize $#2$}}%
\put(73,10){\makebox(0,0){\scriptsize $#3$}}\end{picture}}

\newcommand\bwxdd[3]{%
\begin{picture}(66,12)\put(3,3){\line(1,0){28}}\put(35,3){\line(1,0){26}}%
\put(3,3){\makebox(0,0){$\times$}}\put(33,3){\makebox(0,0){$\o$}}%
\put(63,3){\makebox(0,0){$\o$}}%
\put(3,10){\makebox(0,0){\scriptsize $#1$}}%
\put(33,10){\makebox(0,0){\scriptsize $#2$}}%
\put(63,10){\makebox(0,0){\scriptsize $#3$}}\end{picture}}

\newcommand\ddd[3]{%
\begin{picture}(46,12)\put(5,3){\line(1,0){16}}\put(25,3){\line(1,0){16}}%
\put(3,3){\makebox(0,0){$\o$}}\put(23,3){\makebox(0,0){$\o$}}%
\put(43,3){\makebox(0,0){$\o$}}%
\put(3,10){\makebox(0,0){\scriptsize $#1$}}%
\put(23,10){\makebox(0,0){\scriptsize $#2$}}%
\put(43,10){\makebox(0,0){\scriptsize $#3$}}\end{picture}}

\def\sideremark#1{\ifvmode\leavevmode\fi\vadjust{\vbox to0pt{\vss% the remark
 \hbox to 0pt{\hskip\hsize\hskip1em%                          will appear only
 \vbox{\hsize3cm\tiny\raggedright\pretolerance10000%          on the side
  \noindent #1\hfill}\hss}\vbox to8pt{\vfil}\vss}}}%
                                                   %          in 3cm
                        
                                                   %          wide box
                                                   %          

\begin{document}
\renewcommand{\today}{}
\title{Relative BGG sequences;\\I.\ Algebra}

\author{Andreas \v Cap and Vladimir Sou\v cek}

\address{A.\v C.: Faculty of Mathematics\\
University of Vienna\\
Oskar--Morgenstern--Platz 1\\
1090 Wien\\
Austria\\
V.S.:Mathematical Institute\\ Charles University\\ Sokolovsk\'a
  83\\Praha\\Czech Republic} 
\email{Andreas.Cap@univie.ac.at}
\email{soucek@karlin.mff.cuni.cz}

%\subjclass{primary: 17B20; secondary: 17B10, 17B55}
%\keywords{semisimple Lie algebra; Lie algebra cohomology; relative version 
% of Kostant's theorem; Weyl group; Hasse diagram}

\begin{abstract}
We develop a relative version of Kostant's harmonic theory and use
this to prove a relative version of Kostant's theorem on Lie algebra
(co)homology. These are associated to two nested parabolic subalgebras
in a semisimple Lie algebra. We show how relative homology groups can
be used to realize representations with lowest weight in one (regular
or singular) affine Weyl orbit. In the regular case, we show how all
the weights in the orbit can be realized as relative homology groups
(with different coefficients). These results are motivated by
applications to differential geometry and the construction of
invariant differential operators. 
\end{abstract}

\thanks{First author supported by projects P23244--N13 and P27072--N25
  of the Austrian Science Fund (FWF), second author supported by the
  grant P201/12/G028 of the Grant Agency of the Czech Republic (GACR)}

\maketitle

\pagestyle{myheadings}\markboth{\v Cap and Sou\v cek}{Relative
  BGG--Sequences I}

\section{Introduction}\label{1}
This article is the first in a series of two. The main aim of the
series is to develop a relative version of the machinery of
Bernstein--Gelfand--Gelfand sequences (or BGG sequences) as introduced
in \cite{CSS-BGG} and \cite{Calderbank--Diemer} and to improve the
original constructions at the same time, which is done in
\cite{part2}. This is a construction for invariant differential
operators associated to a class of geometric structures known as
parabolic geometries. For each type of parabolic geometries, there is
a homogeneous model, which is a generalized flag manifold, i.e.~the
quotient of a (real or complex) semisimple Lie group $G$ by a
parabolic subgroup $P$. The starting point for the construction of a
BGG sequence is a finite--dimensional representation $\Bbb V$ of
$G$. On the homogeneous model, the resulting sequence is a resolution
of the locally constant sheaf $\Bbb V$ on $G/P$ by differential
operators acting on spaces of sections of homogeneous vector bundles
induced by irreducible representations of $P$. The resulting
resolution of $\Bbb V$ by principal series representations of $G$ is
dual (in a certain sense) to the resolution of $\Bbb V^*$ by
generalized Verma--modules obtained in \cite{Lepowsky}. This
generalizes the Bernstein--Gelfand---Gelfand resolution of $\Bbb V^*$
by Verma--modules from \cite{BGG}, which motivated the name of the
construction.

The algebraic character of BGG sequences is also reflected by the
tools needed for their construction, and this first part of the series
is devoted to developing the necessary algebraic background 
for the relative version. In particular, we prove a relative version
of Kostant's theorem on Lie algebra cohomology from \cite{Kostant},
which should be of independent interest. The setup for Kostant's
original theorem is a complex semisimple Lie algebra $\frak g$, a
parabolic subalgebra $\frak p\subset\frak g$ with (reductive)
Levi--decomposition $\frak p=\frak g_0\oplus\frak p_+$ and a complex
irreducible representation $\Bbb V$ of $\frak g$. Then Kostant
considered the standard complex $(C^*(\frak p_+,\Bbb V),\partial)$
computing the Lie algebra cohomology of the nilpotent Lie algebra
$\frak p_+$ with coefficients in $\Bbb V$. The spaces in this complex
are naturally representations of $\frak g_0$ and the differentials are
$\frak g_0$--equivariant. Thus the cohomology groups $H^*(\frak
p_+,\Bbb V)$ are representations of the reductive Lie algebra $\frak
g_0$ and Kostant's theorem describes these representations explicitly
and algorithmically in terms of highest weights.

While higher Lie algebra cohomology groups seem to be difficult to
interpret in general, Kostant's theorem has immediate algebraic
applications, see \cite{Kostant}. Even the version for the Borel
subalgebra (which in some respects is significantly simpler than the
general result) very quickly implies the Weyl character formula, thus
providing a completely algebraic proof for this formula. Moreover,
together with the Peter--Weyl theorem, Kostant's theorem can be used
to proof Bott's generalized version (see \cite{Bott}) of the
Borel--Weil theorem describing the sheaf cohomology of the sheaf of
local holomorphic sections of a homogeneous vector bundle over a
complex generalized flag manifold. Apart from the applications in the
theory of parabolic geometries (see \cite{book}), Kostant's theorem
has also been applied in other areas recently. For example, in
\cite{LR}, Lie algebra cohomology as computed via Kostant's theorem is
used as a replacement for Spencer cohomology in connection with
exterior differential systems to prove rigidity results in algebraic
geometry.

It is important to point out here that for the applications to
parabolic geometries, not only Kostant's theorem itself is
important. Also some of the tools introduced by Kostant in the proof
play a central role there. These tools are also available for
parabolic subalgebras in real semisimple Lie algebras, and the real
versions are needed in the applications. Moreover, for these
applications it is very important to carefully keep track about the
possibility of formulating things in a $\frak p$--invariant
way. Likewise, one has to carefully distinguish between filtered
modules and their associated graded modules in this setting, while
this distinction plays no role for $\frak g_0$--modules. In view of
the applications in \cite{part2}, we will work in a real setting for
most of the article and be more careful about invariance under the
parabolic subalgebra than it would be necessary for the purposes of
the current article.

The setup for the relative version is provided by two nested parabolic
subalgebras $\frak q\subset\frak p$ in a semisimple Lie algebra $\frak
g$. Here $\frak q$ will be the ``main'' parabolic subalgebra (so
$\frak q$--invariance will be important), while the larger parabolic
$\frak p$ is an auxiliary input. If $\Bbb V$ is an irreducible
representation of $\frak p$, then the nilradical $\frak
p_+\subset\frak p$ acts trivially on $\Bbb V$. Moreover, the
nilradicals satisfy $\frak p_+\subset\frak q_+\subset\frak p$, so we
can naturally view $\Bbb V$ as a representation of the nilpotent Lie
algebra $\frak q_+/\frak p_+$. In view of $\frak q$--invariance, it is
preferable to work with Lie algebra homology rather than Lie algebra
cohomology. The standard complex for Lie algebra homology consists of
$\frak q$--modules and $\frak q$--equivariant maps, so the Lie algebra
homology groups $H_*(\frak q_+/\frak p_+,\Bbb V)$ are automatically
representations of $\frak q$. These are completely reducible, and
hence can (in the complex case) be described via weights. This
description is the content of the relative version of Kostant's
theorem, which we prove as Theorem \ref{thm2.9}. As a module for the
Levi--factor of $\frak q$, one can identify this with Lie algebra
cohomology for the quotients of the nilradicals of the opposite
parabolics. As in Kostant's original theorem, the description is in
terms of the orbit of an extremal weight of $\Bbb V$ under a subset
$W^{\frak q}_{\frak p}$ of the Weyl group of $\frak g$.

The second main topic of the article is the relation between absolute
and relative homology groups in the case of regular infinitesimal
character. Suppose that, given $\frak q\subset\frak p\subset\frak g$,
we take an irreducible representation $\tilde{\Bbb V}$ of $\frak g$
and let $\Bbb V$ be its $\frak p$--irreducible quotient, which can be
realized as $H_0(\frak p_+,\tilde{\Bbb V})$. Then it follows from the
explicit descriptions via the Weyl group that $H_*(\frak q_+/\frak
p_+,\Bbb V)$ consists of some of the irreducible components of
$H_*(\frak q_+,\tilde{\Bbb V})$. More generally, we prove that the
Hasse diagram $W^{\frak q}$ of $\frak q$, which parametrizes the
irreducible components in $H_*(\frak q_+,\tilde{\Bbb V})$ can be
written as a product $W^{\frak q}_{\frak p}\x W^{\frak p}$, which
leads to an isomorphism
$$
H_*(\frak q_+,\tilde{\Bbb V})\cong H_*(\frak q_+/\frak p_+,H_*(\frak
p_+,\tilde{\Bbb V})),
$$ see Theorem \ref{thm3.3}. The description as a product
significantly simplifies the determination of the Hasse diagram for
non--maximal parabolic subalgebras as we demonstrate in Example
\ref{ex3.2}. Initially, the above isomorphism of homology groups is
proved by comparing lowest weights. In preparation for the
applications in \cite{part2}, we conclude the paper by constructing an
explicit isomorphism from $\frak q$--invariant data. This is based on
a filtration of $C_*(\frak q_+,\tilde{\Bbb V})$ by $\frak
q$--submodules. While this does not lead to a filtered complex, it can
be used to construct explicit $\frak q$--equivariant maps, which
realize the decomposition of $H_*(\frak q_+,\tilde{\Bbb V})$ according
to the degree in the second factor under the above isomorphism.

\section{A relative version of Kostant's theorem}\label{2}
Given two nested parabolic subalgebras $\frak q\subset\frak p$ in a
(real or complex) semisimple Lie algebra $\frak g$, we first develop a
relative version of Kostant's Hodge theory. As remarked in the
introduction, we work in a $\frak q$--invariant setting as much as
possible. Given a completely reducible representation $\Bbb V$ of
$\frak p$, the main object of study thus is the standard complex for
Lie algebra homology, while the other ingredients for the Hodge theory
are of auxiliary nature. Then we compute the action of the algebraic
Laplacian on an isotypical component. In the complex case, this leads
to a description of Lie algebra homology in terms of a relative analog
of the Hasse diagram.

\subsection{Nested parabolic subalgebras}\label{2.1}
Throughout this article, we consider a (real or complex) semisimple Lie
algebra $\frak g$ endowed with two nested parabolic subalgebras $\frak
q\subset\frak p\subset\frak g$. (In view of the applications we have
in mind, this notation, as well as much of the notation in what
follows is chosen in accordance with the literature on parabolic
geometries.) It is well known that both in the real and in the complex
case, parabolic subalgebras can be described in terms of so--called
$|k|$--gradings of $\frak g$ (for various values of $k\in\mathbb
N$). Such a grading is a decomposition of $\frak g$ as a direct sum
$$
\frak g=\frak g_{-k}\oplus\dots\oplus\frak g_k
$$ 
such that $[\frak g_i,\frak g_j]\subset\frak g_{i+j}$ (with $\frak
g_\ell=\{0\}$ for $|\ell|>k$). Moreover, we make the technical
assumptions that no simple ideal of $\frak g$ is contained in $\frak
g_0$ (which is a subalgebra of $\frak g$ by the grading property) and
that the positive (negative) part of the grading is generated as a Lie
algebra by $\frak g_1$ ($\frak g_{-1}$). The parabolic subalgebra
determined by such a grading is the non--negative part $\frak
g_0\oplus\dots\oplus\frak g_k$ of the grading. It then turns out that
the positive part is the nilradical of the parabolic subalgebra, while
$\frak g_0$ is its Levi factor. 

Since we have to deal with two parabolics at the same time, we will
avoid the use of explicit indices for the grading
components. Following \cite{twistor}, we denote the decompositions of
$\frak g$ determined by the two parabolic subalgebras by
$$
\frak g=\frak q_-\oplus\frak q_0\oplus\frak q_+ \quad\text{and}\quad 
\frak g=\frak p_-\oplus\frak p_0\oplus\frak p_+, 
$$ respectively. For a parabolic subalgebra, the nilradical coincides
with the annihilator with respect to the Killing form. Thus $\frak
q\subset\frak p$ implies $\frak p_+\subset\frak q_+$ and by symmetry
of the grading we conclude that $\frak p_-\subset\frak q_-$. Since
$\frak p_+$ is an ideal in $\frak p$, it is also an ideal in $\frak
q$ and in $\frak q_+$.

The classification of parabolic subalgebras is done via structure
theory. Given a $|k|$--grading of a complex semisimple Lie algebra
$\frak g$, one shows that one can choose a Cartan subalgebra $\frak h$
contained in $\frak g_0$, so its adjoint action preserves the
decomposition defined by the grading. The assumptions then easily
imply that the root spaces corresponding to simple roots are either
contained in $\frak g_0$ or in $\frak g_1$. Denoting by
$\Si\subset\Delta^0$ the set of those simple roots with root spaces
contained in $\frak g_1$, it turns out that the grading is given by
the $\Si$--height. This means that to determine the grading component
containing a root space $\frak g_{\al}$, one expresses $\al$ as a
linear combination of simple roots and then adds up the coefficients
of the roots contained in $\Si$. This leads to a bijective
correspondence between the set of conjugacy classes of parabolic
subalgebras and subsets of $\Delta^0$, with the empty set
corresponding to $\frak g$ and the full set $\Delta^0$ corresponding
to the Borel subalgebras. In the real case, one obtains a similar
description in terms of restricted roots for a maximally non--compact
Cartan subalgebra, see section 3.2.9 of \cite{book}.

The grading components for a $|k|$--grading can be realized as the
eigenspaces for the adjoint action of an element in the center of
$\frak g_0$, the so--called grading element. In terms of structure
theory, this element is always contained in the Cartan subalgebra. In
particular, the grading elements associated to two nested parabolics
commute, so the gradings are compatible. In particular, this implies
that $\frak p_0$ is invariant under the adjoint action of the grading
element $E_{\frak q}$ for $\frak q$. Hence we obtain the finer
decomposition
\begin{equation}\label{g-decomp}
\frak g=\frak p_-\oplus(\frak p_0\cap\frak q_-)\oplus\frak
q_0\oplus(\frak p_0\cap\frak q_+)\oplus\frak p_+ 
\end{equation}
of $\frak g$ into a direct sum of subalgebras with the first two
summands adding up to $\frak q_-$ and the last two summands adding up
to $\frak q_+$. 

Let us finally remark that it is no problem to choose parabolic
subgroups corresponding to two nested parabolic subalgebras in a nice
way. Given a Lie group $G$ with Lie algebra $\frak g$, we can first
choose a parabolic subgroup $P\subset G$ corresponding to $\frak
p\subset\frak g$. This means that $P$ is a subgroup of the normalizer
$N_G(\frak p)$ which contains the connected component of the identity
of this normalizer. Then we consider the normalizer $N_P(\frak q)$ of
$\frak q$ in $P$ and choose a subgroup $Q$ lying between this
normalizer and its connected component of the identity, thus obtaining
groups $Q\subset P\subset G$ corresponding to $\frak q\subset\frak
p\subset\frak g$. 

\subsection{Lie algebra homology}\label{2.2}
While the general theory of finite dimensional representations of a
parabolic subalgebra is rather intractable, irreducible
representations (and hence also completely reducible ones) are rather
easy to understand. If $\frak p\subset \frak g$ is a parabolic
subalgebra and $\Bbb V$ is an irreducible representation of $\frak p$,
then the nilradical $\frak p_+$ acts trivially on $\Bbb V$, so the
representation descends to the reductive quotient $\frak p/\frak
p_+\cong\frak p_0$. Conversely, a representation of $\frak p_0$ is
completely reducible, provided that the center acts diagonalizably,
and one can extend such a representation to $\frak p$ (with $\frak
p_+$ acting trivially).

In the setting of two nested parabolic subalgebras $\frak
q\subset\frak p$ in $\frak g$ and a completely reducible
representation $\Bbb V$ of $\frak p$, we can first restrict the
representation to the (nilpotent) subalgebra $\frak q_+\subset\frak
p$. As we have noted in Section \ref{2.1}, $\frak q_+$ contains the
nilradical $\frak p_+$ of $\frak p$, and clearly $\frak p_+$ is an
ideal in $\frak q_+$. Hence $\Bbb V$ is automatically a representation
of the Lie algebra $\frak q_+/\frak p_+$. Thus we can consider the
standard complex for computing Lie algebra homology of $\frak
q_+/\frak p_+$ with coefficients in $\Bbb V$. The spaces in this
complex are
$$
C_k(\frak q_+/\frak p_+,\Bbb V):=\La^k(\frak q_+/\frak p_+)\otimes\Bbb
V. 
$$ 
Following \cite{Kostant}, where the differential in this complex was
obtained as the adjoint of a Lie algebra cohomology differential, we
denote it by
$$ 
\partial^*_\rho : \La^k(\frak q_+/\frak p_+)\otimes\Bbb V\to
\La^{k-1}(\frak q_+/\frak p_+)\otimes\Bbb V, 
$$ and call it the \textit{relative Kostant--codifferential}.
Explicitly, this differential is given by
\begin{equation}\label{eq:codiff}
\begin{aligned}
 \partial^*_\rho(Z_1&\wedge\dots\wedge Z_k\otimes v):=
\textstyle\sum_i(-1)^iZ_1\wedge\dots\wedge\widehat{Z_i}\wedge\dots\wedge
Z_k\otimes Z_i\cdot v\\ 
&+\textstyle\sum_{i<j}(-1)^{i+j}[Z_i,Z_j]\wedge
Z_1\wedge\dots\wedge\widehat{Z_i}\wedge\dots\wedge\widehat{Z_j}\wedge
\dots\wedge Z_k\otimes v, 
\end{aligned}
\end{equation}
for $Z_1,\dots,Z_k\in \frak q_+/\frak p_+$ and $v\in \Bbb V$, with
hats denoting omission. It satisfies
$\partial^*_\rho\o\partial^*_\rho=0$ and the homology groups of
$(C_*(\frak q_+/\frak p_+,\Bbb V),\partial^*_\rho)$ are the Lie
algebra homology groups $H_*(\frak q_+/\frak p_+,\Bbb V)$.

\begin{prop}\label{prop2.2}
  Let $\frak q\subset\frak p\subset\frak g$ be nested parabolic
  subalgebras, $Q\subset P\subset G$ be corresponding groups as in
  Section \ref{2.1}, and let $\Bbb V$ be a finite dimensional,
  completely reducible representation of $\frak p$.

  Then the spaces $C_*(\frak q_+/\frak p_+,\Bbb V)$ are naturally
  $\frak q$--modules such that the differentials $\partial^*_\rho$ are
  $\frak q$--equivariant. Moreover, this action has the property that
  $\frak q_+\cdot\ker(\partial^*_\rho)\subset\im(\partial^*_\rho)$, so
  the induced representations of $\frak q$ on the homology groups
  $H_*(\frak q_+/\frak p_+,\Bbb V)$ are completely reducible. Finally,
  if $\Bbb V$ is a completely reducible representation of the group
  $P$, then the module structures lift to $Q$ and the differentials
  are $Q$--equivariant.
\end{prop}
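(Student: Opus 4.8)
The plan is to exhibit the $\frak q$–module structure on $C_*(\frak q_+/\frak p_+,\Bbb V)$ explicitly and then verify the stated properties by direct computation. Since $\frak p_+$ is an ideal in $\frak q$, the quotient $\frak q_+/\frak p_+$ carries the adjoint action of $\frak q$, and since $\Bbb V$ is a representation of $\frak p\supset\frak q$ it is in particular a $\frak q$–module. Hence $\La^k(\frak q_+/\frak p_+)\otimes\Bbb V$ is a $\frak q$–module via the tensor product of these actions. First I would write out the formula for the action of $X\in\frak q$ on a decomposable element and check $\frak q$–equivariance of $\partial^*_\rho$. This is the standard verification that the Chevalley–Eilenberg codifferential commutes with the diagonal action: one uses that $X$ acts by a derivation with respect to the bracket on $\frak q_+/\frak p_+$ (which is the adjoint action passed to the quotient) and that the module action on $\Bbb V$ satisfies $X\cdot(Z\cdot v)-Z\cdot(X\cdot v)=[X,Z]\cdot v$. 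The one point requiring care — and this is where $\frak q$–invariance rather than merely $\frak q_0$–invariance enters — is that the bracket term in \eqref{eq:codiff} uses the bracket of $\frak q_+/\frak p_+$; one must check that $[\cdot,\cdot]$ on this quotient is $\frak q$–equivariant, i.e.\ that $\operatorname{ad}_X$ descends and acts as a derivation. This follows because $\frak p_+$ is an ideal not only in $\frak p$ but in all of $\frak q$.

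Next I would prove the key containment $\frak q_+\cdot\ker(\partial^*_\rho)\subset\im(\partial^*_\rho)$. The standard mechanism here is the Cartan-type homotopy formula. For $Z\in\frak q_+/\frak p_+$, one has the insertion operator (exterior multiplication by $Z$ followed by the module action), and the identity $\partial^*_\rho\o\iota_Z+\iota_Z\o\partial^*_\rho=\mathcal L_Z$ should hold, where $\mathcal L_Z$ is the Lie derivative, i.e.\ the action of $Z\in\frak q_+$ on $C_*(\frak q_+/\frak p_+,\Bbb V)$ coming from the $\frak q$–module structure just constructed. I would verify this identity on decomposable elements; it is the Lie-algebra-homology analog of Cartan's magic formula and the computation is routine. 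Given this, if $\om\in\ker(\partial^*_\rho)$ and $Z\in\frak q_+$, then $Z\cdot\om=\mathcal L_Z\om=\partial^*_\rho(\iota_Z\om)+\iota_Z(\partial^*_\rho\om)=\partial^*_\rho(\iota_Z\om)\in\im(\partial^*_\rho)$, using that $\frak q_+/\frak p_+$ spans the relevant insertions and that $\partial^*_\rho\om=0$. Since $\frak q_+$ acts trivially on homology and $\frak q/\frak q_+\cong\frak q_0$ is reductive with the center acting diagonalizably (the grading element acts by integer eigenvalues on the grading components of $\Bbb V$ and on $\La^k(\frak q_+/\frak p_+)$), the $\frak q$–module $H_*(\frak q_+/\frak p_+,\Bbb V)$ is completely reducible.

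For the final assertion about lifting to the group $Q$, I would note that when $\Bbb V$ is a representation of $P$, it restricts to a representation of $Q$, and $Q$ acts on $\frak q_+/\frak p_+$ by the adjoint action (well-defined since $Q$ normalizes $\frak q$ and hence $\frak q_+$, and since $P$ — hence $Q$ — normalizes $\frak p$ and hence $\frak p_+$). Thus $Q$ acts on each $C_k(\frak q_+/\frak p_+,\Bbb V)$, and differentiating recovers the $\frak q$–action above; the $\frak q$–equivariance of $\partial^*_\rho$ then upgrades to $Q$–equivariance because $\partial^*_\rho$ is defined by a universal formula in terms of the bracket and the module action, both of which are $Q$–equivariant (alternatively, $Q$–equivariance on the connected component follows from $\frak q$–equivariance by exponentiation, and on all of $Q$ from the definition of $\partial^*_\rho$ via the $\operatorname{Ad}$-action and the $Q$-action on $\Bbb V$).

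I expect the main obstacle to be bookkeeping rather than conceptual: carefully checking that every operation — the bracket on $\frak q_+/\frak p_+$, the insertion operators, the codifferential — is genuinely well-defined on the quotient by $\frak p_+$ and genuinely $\frak q$–(resp.\ $Q$–)equivariant, keeping the signs in \eqref{eq:codiff} straight throughout the homotopy-formula computation. The reason this works at the level of $\frak q$ and not just $\frak g_0$ is precisely the observation recorded in Section~\ref{2.1} that $\frak p_+$ is an ideal in $\frak q$ and in $\frak q_+$, so I would make sure to invoke that explicitly at each step where the quotient structure is used.
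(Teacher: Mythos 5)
Your proposal is correct and follows essentially the same route as the paper: the module structure comes from the adjoint action on $\frak q_+/\frak p_+$ (using that $\frak p_+$ is an ideal in $\frak q$) tensored with the restricted action on $\Bbb V$, and the key containment $\frak q_+\cdot\ker(\partial^*_\rho)\subset\im(\partial^*_\rho)$ is obtained from exactly the homotopy identity you state, namely $Z\cdot\ph=-\partial^*_\rho((Z+\frak p_+)\wedge\ph)-(Z+\frak p_+)\wedge\partial^*_\rho(\ph)$, with complete reducibility then following from diagonalizability of the action of the center of $\frak q_0$. The only cosmetic slip is your description of the homotopy operator as ``exterior multiplication by $Z$ followed by the module action''---it is just exterior multiplication by $Z+\frak p_+$ (up to sign)---but your subsequent use of it is the correct one.
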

\begin{proof}
  As we have noted in Section \ref{2.1}, the adjoint action of $Q$
  preserves both $\frak q_+$ and $\frak p_+$, so there is a natural
  induced action (by automorphisms) on $\frak q_+/\frak
  p_+$. Correspondingly, $\frak q$ acts on $\frak q_+/\frak p_+$ by
  derivations. On the other hand, $\frak q$ acts on $\Bbb V$ by
  restriction of the $\frak p$--action, and an action of $P$ on $\Bbb
  V$ can be restricted to the subgroup $Q$. Hence we obtain the
  claimed module structures on the spaces $C_*(\frak q_+/\frak
  p_+,\Bbb V)$. Of course the action $(\frak q_+/\frak p_+)\x\Bbb
  V\to\Bbb V$ is equivariant for the natural $\frak q$--action and for
  the $Q$--action in case that $\Bbb V$ is a $P$--module. Together
  with the above this implies equivariancy of the differentials
  $\partial^*_\rho$ by their definition.

  Starting from the definition of $\partial^*_\rho$, a simple
  computation (see the proof of Lemma 3.3.2 in \cite{book}) shows that
  for $\ph\in\La^k(\frak q_+/\frak p_+)\otimes\Bbb V$ and $Z\in\frak
  q_+$ one gets
$$ 
Z\cdot \ph=-\partial^*_\rho((Z+\frak p_+)\wedge\ph)-(Z+\frak
p_+)\wedge\partial^*_\rho(\ph).
$$ 
This immediately implies that $\frak
q_+\cdot\ker(\partial^*_\rho)\subset\im(\partial^*_\rho)$, so $\frak
q_+$ acts trivially on the homology groups. Since by assumption the
center of $\frak q_0$ acts diagonalizably on all the involved
representations, complete reducibility of the homology representations
follows.
\end{proof}

\subsection{Lie algebra cohomology}\label{2.3}
The first step towards the computation of the Lie algebra homology
groups $H_*(\frak q_+/\frak p_+,\Bbb V)$ is the construction of an
adjoint to the Lie algebra homology differential. However, such an
adjoint cannot be constructed as a $\frak q$--equivariant map, one
only obtains equivariancy under $\frak q_0$.  

The Killing form $B$ on $\frak g$ is compatible with the
$|k|$--grading determined by any parabolic subalgebra in the sense that
it induces a duality (of $\frak g_0$--modules) between $\frak g_i$ and
$\frak g_{-i}$ for all $i\neq 0$ and its restriction to $\frak g_0$ is
non--degenerate. For the two decompositions we are dealing with, this
can be interpreted as an identification of $\frak q_+$ with $\frak
q_{-}^*$ while $\frak p_+\subset\frak q_+$ can be identified with the
annihilator of $\frak p\cap\frak q_-=\frak p_0\cap\frak q_-$. Hence we
can identify $\frak q_+/\frak p_+$, as a $\frak q_0$--module, with the
dual of $\frak p_0\cap\frak q_-\subset\frak g$. Since both $\frak p_0$
and $\frak q_-$ are Lie subalgebras of $\frak g$ and $\frak q_-$ is
nilpotent, $\frak p_0\cap\frak q_-$ is a nilpotent Lie subalgebra of
$\frak g$, which naturally acts on $\Bbb V$ by the restriction of the
$\frak p_0$--action.

Hence, for each $k$, we can identify the chain group $C_k(\frak
q_+/\frak p_+,\Bbb V)$ as a $\frak q_0$--module with the cochain group
$C^k(\frak q_-\cap\frak p_0,\Bbb V)$. Consequently, we obtain the Lie
algebra cohomology differential, which for consistency we denote by
$$
\partial_\rho:C_k(\frak p_+/\frak q_+,\Bbb V)\to C_{k+1}(\frak p_+/\frak
q_+,\Bbb V).  
$$
Viewing elements of $C_*(\frak p/\frak q,\Bbb V)$ as alternating
multilinear maps from $\frak q_-\cap\frak p_0$ to $\Bbb V$, this
differential is given by
\begin{align*}
  \partial_\rho\ph(X_0&,\dots,X_k):=\textstyle\sum_{i=0}^k(-1)^iX^i\cdot
  \ph(X_0,\dots,\widehat{X_i},\dots,X_k)\\
  &+\textstyle\sum_{i<j}(-1)^{i+j}\ph([X_i,X_j],X_0,\dots,\widehat{X_i},\dots,
  \widehat{X_j},\dots,X_k),
\end{align*}
for $X_0,\dots,X_k\in\frak q_-\cap\frak p_0$ with hats denoting
omission.

\subsection{Algebraic Hodge decomposition}\label{2.4}
The first key step toward the proof of Kostant's theorem and its
relative analog is that the two differentials $\partial^*_\rho$ and
$\partial_\rho$ satisfy a property called disjointness by
Kostant. This easily implies that they give rise to a
Hodge--decomposition of the chain groups. To formulate this, we first
define the obvious analog
$\square_\rho=\partial^*_\rho\o\partial_\rho+\partial_\rho\o\partial^*_\rho$
of the Kostant Laplacian, which maps each $C_k(\frak q_+/\frak p_+,\Bbb V)$
to itself.

\begin{lemma}[Hodge decomposition]\label{lem2.4}
  For any completely reducible representation $\Bbb V$ of $\frak p$
  and each $k=0,\dots,\dim(\frak q_+/\frak p_+)$ one has a
  decomposition
$$
C_k(\frak q_+/\frak p_+,\Bbb
V)=\im(\partial^*_\rho)\oplus\ker(\square_\rho)\oplus\im(\partial_\rho) 
$$
as a direct sum of $\frak q_0$--modules. Moreover, the first two
summands add up to $\ker(\partial^*_\rho)$, while the last two
summands add up to $\ker(\partial_\rho)$. Consequently, both the Lie
algebra homology group $H_k(\frak q_+/\frak p_+,\Bbb V)$ and the Lie
algebra cohomology group $H^k(\frak q_-\cap\frak p_0,\Bbb V)$ are
isomorphic to $\ker(\square_\rho)\subset C_k(\frak q_+/\frak p_+,\Bbb
V)$ as $\frak q_0$--modules.
\end{lemma}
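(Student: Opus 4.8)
The plan is to follow Kostant's original strategy, adapting each step to the relative setting. First I would establish the key algebraic fact (the analog of Kostant's ``disjointness'') that
$$
\langle \partial_\rho\ph,\ps\rangle = \langle \ph,\partial^*_\rho\ps\rangle
$$
for a suitable $\frak q_0$--invariant inner product on $C_*(\frak q_+/\frak p_+,\Bbb V)$, so that $\partial^*_\rho$ is genuinely the adjoint of $\partial_\rho$ under the identification $C_k(\frak q_+/\frak p_+,\Bbb V)\cong C^k(\frak q_-\cap\frak p_0,\Bbb V)$ from Section~\ref{2.3}. To get such an inner product one uses that $\Bbb V$, being completely reducible with the center of $\frak p_0$ acting diagonalizably, carries a positive definite Hermitian (or in the real split case, symmetric bilinear) form that is invariant under a compact real form of the Levi; combined with the Killing-form duality between $\frak q_+/\frak p_+$ and $\frak q_-\cap\frak p_0$ this produces the required $\frak q_0$--invariant form on each chain group. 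Verifying the adjointness relation is then a direct computation comparing the explicit formulas for $\partial_\rho$ and $\partial^*_\rho$; this is essentially the content of the cited Lemma~3.3.2 of \cite{book}, now carried out with $\frak q_+/\frak p_+$ and $\frak q_-\cap\frak p_0$ in place of $\frak g_+$ and $\frak g_-$.

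Once adjointness is in hand, the Hodge decomposition is formal linear algebra. Since $\square_\rho=\partial^*_\rho\partial_\rho+\partial_\rho\partial^*_\rho$ is self-adjoint and non-negative with respect to the inner product, each chain group decomposes orthogonally as $\ker(\square_\rho)\oplus\im(\square_\rho)$, and using $\partial_\rho\o\partial_\rho=0=\partial^*_\rho\o\partial^*_\rho$ together with the adjointness one shows $\im(\square_\rho)=\im(\partial_\rho)\oplus\im(\partial^*_\rho)$ and that these two images are mutually orthogonal; moreover $\ker(\square_\rho)=\ker(\partial_\rho)\cap\ker(\partial^*_\rho)$. This gives the three-term decomposition, and the statements that $\ker(\partial^*_\rho)=\im(\partial^*_\rho)\oplus\ker(\square_\rho)$ and $\ker(\partial_\rho)=\ker(\square_\rho)\oplus\im(\partial_\rho)$ follow by intersecting the decomposition with each kernel and using orthogonality. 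All of this is visibly $\frak q_0$--equivariant because the inner product, $\partial_\rho$, $\partial^*_\rho$, and hence $\square_\rho$ are all $\frak q_0$--equivariant (for $\partial^*_\rho$ this is part of Proposition~\ref{prop2.2}, and for $\partial_\rho$ it follows from its construction in Section~\ref{2.3}).

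Finally, the identification of the homology and cohomology groups with $\ker(\square_\rho)$ is immediate from the decomposition: $H_k(\frak q_+/\frak p_+,\Bbb V)=\ker(\partial^*_\rho)/\im(\partial^*_\rho)\cong\ker(\square_\rho)$ as $\frak q_0$--modules, and dually $H^k(\frak q_-\cap\frak p_0,\Bbb V)=\ker(\partial_\rho)/\im(\partial_\rho)\cong\ker(\square_\rho)$, where the cohomology is computed on $C^*(\frak q_-\cap\frak p_0,\Bbb V)$, identified with $C_*(\frak q_+/\frak p_+,\Bbb V)$ as in Section~\ref{2.3}.

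The step I expect to be the main obstacle is the very first one: constructing the $\frak q_0$--invariant inner product for which $\partial^*_\rho$ is literally the adjoint of $\partial_\rho$, and checking this carefully in the real case. In the complex case one can pass to a compact real form of the Levi factor and average, but keeping track of which pieces of $\frak q_0$ act by which compact/diagonalizable part — and ensuring the pairing is compatible with the Killing-form identification of $\frak q_+/\frak p_+$ with the dual of $\frak q_-\cap\frak p_0$ rather than with $\frak q_-$ itself — requires some care. Once the correct pairing is set up, the adjointness computation and the ensuing linear algebra are routine.
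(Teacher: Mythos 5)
Your proposal follows essentially the same route as the paper: reduce to the complex irreducible case, use a Cartan involution/compact real form of the Levi (with the center's action handled via the scalar by which $\frak z(\frak p_0)$ acts) to build a positive definite inner product on the chain spaces making $\partial_\rho$ and $\partial^*_\rho$ adjoint, and then deduce the decomposition by the standard disjointness/Hodge argument. One small imprecision: the inner product is not literally $\frak q_0$--invariant (a definite form cannot be invariant under the full complex reductive $\frak q_0$); this is harmless, since the three summands are kernels and images of $\frak q_0$--equivariant operators and hence submodules in any case.
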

\begin{proof}
  The main step in the proof is to verify disjointness of the
  operators $\partial_\rho$ and $\partial^*_\rho$ in the sense of
  Kostant, i.e.~that
  $\ker(\partial^*_\rho)\cap\im(\partial_\rho)=\{0\}$ and
  $\ker(\partial_\rho)\cap\im(\partial^*_\rho)=\{0\}$. Having verified
  this, the argument in the proof of Theorem 3.3.1 of \cite{book} can
  be applied without changes to prove the Hodge decomposition.

  Decomposing $\Bbb V$ into a direct sum of irreducibles, we get an
  induced decomposition of all chain spaces, which is preserved by
  both operators, so it suffices to prove disjointness in the case
  that $\Bbb V$ is irreducible. Moreover, disjointness of two maps
  clearly follows from disjointness of complex linear
  extensions. Using complexifications, we may thus without loss of
  generality assume that $\frak g$ is a complex semisimple Lie
  algebra, $\frak q\subset\frak p\subset \frak g$ are complex
  parabolic subalgebras, and that $\Bbb V$ is a complex irreducible
  representation of $\frak p$. This means that $\Bbb V$ is an
  irreducible representation of the reductive Lie algebra $\frak p_0$
  (extended trivially on $\frak p_+$), so in particular the center
  $\frak z(\frak p_0)$ acts by scalars determined by a complex linear
  functional $\la:\frak z(\frak p_0)\to\Bbb C$.

  We may further assume that we have chosen a Cartan subalgebra $\frak
  h\subset\frak g$ and a set of positive roots such that $\frak p$ and
  $\frak q$ are standard parabolics with respect to these choices, so
  they both contain $\frak h$ and all positive root spaces. Now we
  verify disjointness by constructing a positive definite inner
  product on the chain spaces for which the operators
  $\partial^*_\rho$ and $\partial_\rho$ are adjoint. To do this, we
  first consider the Cartan involution $\th$ for $\frak g$ coming from
  the standard construction of a compact real form $\frak
  u\subset\frak g$ as in Proposition 2.3.1 of \cite{book}. This acts
  as minus the identity on the real subspace $\frak h_0\subset\frak h$
  on which all roots have real values, so that for the grading
  elements we get $\th(E_{\frak p})=-E_{\frak p}$ and $\th(E_{\frak
    q})=-E_{\frak q}$. In particular, $\th$ respects both $\frak p_0$
  and $\frak q_0$ and exchanges $\frak p_0\cap\frak q_-$ and $\frak
  p_0\cap\frak q_+$. Moreover, the Killing form $B$ of $\frak g$ is
  non--degenerate on $(\frak p_0\cap\frak q_-)\oplus(\frak
  p_0\cap\frak q_+)$ which shows that $B_\th(X,Y):=-B(X,\th(Y))$
  restricts to a positive definite inner product on $\frak
  p_0\cap\frak q_+$. Hence we also get an induced inner product on
  $\La^*(\frak p_0\cap\frak q_+)$.

  As an automorphism of the Lie algebra $\frak p_0$, $\th$ also
  respects the decomposition $\frak p_0=\frak z(\frak p_0)\oplus
  [\frak p_0,\frak p_0]$, and we denote the second summand by $\frak
  p_0^s$. The fixed point set of $\th|_{\frak p_0^s}$ is $\frak
  u\cap\frak p_0^s$ so this is a compact real form of $\frak
  p_0^s$. In case that the functional $\la$ is non--zero, we next have
  to modify the restriction of $\th$ to $\frak z(\frak p_0)$ in such a
  way that $\la\o \th=-\bar\la$, for example by choosing an
  isomorphism with $\Bbb C^k$ with first coordinate $\la$ and then
  pulling back complex conjugation.

  Compactness of $\frak u\cap\frak p_0^s$ then implies that there is a
  Hermitian inner product $\langle\ ,\ \rangle$ on $\Bbb V$ for which
  elements of this subalgebra act by skew Hermitian endomorphisms. Now
  by construction, $\langle A\cdot v_1,v_2\rangle=-\langle
  v_1,\th(A)\cdot v_2\rangle$. Together with the inner product on
  $\La^*(\frak p_0\cap\frak p_+)$ from above, we get inner products on
  all chain spaces. Having these inner products at hand, the proof of
  adjointness of $\partial_\rho$ and $\partial^*_\rho$ works exactly
  as in Proposition 3.1.1 of \cite{book}, and disjointness follows.
\end{proof}

\subsection{A $\frak q$--invariant interpretation}\label{2.5}
At this point, we make a short digression, which is mainly needed for
the geometric applications in \cite{part2}. For these applications,
we need interpretations of what we have done so far in terms of $\frak
q$--modules. As we have noted in Section \ref{2.2}, this is not a
problem for the complex $(C_*(\frak q_+/\frak p_+,\Bbb
V),\partial^*_\rho)$, which consists of $\frak q$--modules and $\frak
q$--equivariant maps.

However, the Lie subalgebra $\frak p_0\cap\frak q_-$ used in the
construction of $\partial_\rho$ does not carry a natural $\frak
q$--module structure. Of course, one could define such a structure via
the $\frak q_0$--equivariant isomorphism with $(\frak q_+/\frak
p_+)^*$ from Section \ref{2.3}, but then the Lie bracket is not $\frak
q$--equivariant. This is reflected in the fact that, viewed as maps on
the spaces $C_*(\frak q_+/\frak p_+,\Bbb V)$, the Lie algebra
cohomology differentials $\partial_\rho$ are only $\frak
q_0$--equivariant and not $\frak q$--equivariant. 

To solve this problem, we first observe that on a completely reducible
representation $\Bbb V$ of $\frak p$, one obtains a natural grading
similar to the $|k|$--grading on $\frak g$. In the case that $\Bbb V$
is a complex irreducible representation of $\frak p$, we can view it
as a representation of the reductive algebra $\frak p_0$. This means
that elements of the Cartan subalgebra act diagonalizably on $\Bbb V$,
so in particular this is true for the grading element $E_{\frak q}$
which lies in the center of $\frak q_0$. Hence we can decompose $\Bbb
V$ into eigenspaces for $E_{\frak q}$, which all are $\frak
q_0$--invariant by construction. With respect to the grading of $\frak
g=\oplus\frak g_i$ induced by $\frak q$, this clearly has the property
that the action of $\frak g_j$ maps the eigenspace with eigenvalue $a$
to the eigenspace for $a+j$ for any $j$. So we can view the eigenspace
decomposition of $\Bbb V$ as defining a grading $\Bbb V=\Bbb
V_0\oplus\dots\oplus\Bbb V_N$ for some $N\in\Bbb N$ such that $\frak
g_i\cdot\Bbb V_j\subset\Bbb V_{i+j}$. Via forming direct sums and
complexifications this extends to general completely reducible
representations of $\frak p$.

Now we can combine this with the grading on $\frak q_+/\frak p_+$
induced by the grading of $\frak g$ coming from $\frak q$, to obtain
$\frak q_0$--invariant gradings on all the chain spaces $C_*(\frak
q_+/\frak p_+,\Bbb V)$. The isomorphism $(\frak q_+/\frak
p_+)^*\cong\frak p_0\cap\frak q_-$ is compatible with the gradings and
in the picture of multilinear maps from $\frak p_0\cap\frak q_-$ to
$\Bbb V$ the grading on $C_*(\frak q_+/\frak p_+,\Bbb V)$ is given by
the usual homogeneity of multilinear maps between graded vector spaces.

While these gradings are not $\frak q$--invariant, the fact that
$\frak q=\oplus_{i\geq 0}\frak g_i$ immediately implies that the
filtrations by right ends induced by these gradings all are $\frak
q$--invariant. Denoting the grading components by $C_*(\frak q_+/\frak
p_+,\Bbb V)_i$, the corresponding filtration is defined by 
$$
C_*(\frak q_+/\frak p_+,\Bbb V)^j:=\oplus_{i\geq j}C_*(\frak q_+/\frak
p_+,\Bbb V)_i. 
$$
Given a filtered $\frak q$--module, we can pass to the associated
graded module, which by definition is just the direct sum of the
quotients of each filtration component by the next smaller one. From
the construction, it is clear that as a $\frak q_0$--module, the
associated graded $\gr(C_*(\frak q_+/\frak p_+,\Bbb V))$ is isomorphic
to $C_*(\frak q_+/\frak p_+,\Bbb V)$. But since by construction $\frak
q_+$ maps any filtration component to the next smaller one, $\frak
q_+$ acts trivially on $\gr(C_*(\frak q_+/\frak p_+,\Bbb V))$, so this
is a completely reducible representation of $\frak q$. 

In general, there is neither a natural map from a filtered module to
its associated graded nor a natural map in the other
direction. However, the grading we start with, defines such a
mapping. Explicitly, given $\ph\in C_*(\frak q_+/\frak p_+,\Bbb V)$,
we can uniquely write $\ph=\sum\ph_i$ with $\ph_i\in C_*(\frak
q_+/\frak p_+,\Bbb V)_i$. Denoting by 
$$
q_i:C_*(\frak q_+/\frak p_+,\Bbb V)^i\to\gr_i(C_*(\frak q_+/\frak
p_+,\Bbb V)) 
$$
the canonical projection, our isomorphism is defined by mapping $\ph$
to $\sum_iq_i(\ph_i)$. In this way, we can interpret the maps
$\partial^*_\rho$, $\partial_\rho$ and $\square_\rho$ as $\frak
q$--homomorphisms defined on $\gr(C_*(\frak q_+/\frak p_+,\Bbb V))$
and we then have the Hodge decomposition on this associated graded.

On the other hand, $\partial^*_\rho$ can also be considered as a
$\frak q$--homomorphism defined on $C_*(\frak q_+/\frak p_+,\Bbb V)$
itself. From the explicit formula in Section \ref{2.2} it follows
immediately that $\partial^*_\rho$ actually respects the grading on
the chain space. This implies that for $\ph\in C_*(\frak q_+/\frak
p_+,\Bbb V)^j$ we obtain $\partial^*_\rho\ph\in C_{*-1}(\frak q_+/\frak
p_+,\Bbb V)^j$ and
$q_j(\partial^*_\rho\ph)=\partial^*_\rho(q_j(\ph))$, where the
$\partial^*_\rho$ on the right hand side is the one on the associated
graded. This justifies denoting both maps by the same symbol.

\subsection{The action of $\square_\rho$}\label{2.6}
The next step towards a relative version of Kostant's theorem is an
explicit description of the action of $\square_\rho$ on $C_*(\frak
q_+/\frak p_+,\Bbb V)$. As a Lie algebra and as a $\frak q_0$--module,
we can identify $\frak q_+/\frak p_+$ with $\frak q_+\cap\frak p_0$,
compare with Section \ref{2.5}. Hence we get an isomorphism $C_k(\frak
q_+/\frak p_+,\Bbb V)\cong\La^k(\frak q_+\cap\frak p_0)\otimes\Bbb V$,
and the inclusion of the subalgebra $\frak q_+\cap\frak p_0$ into
$\frak p_0$ induces an inclusion
$$
j:C_k(\frak q_+\cap\frak
p_0,\Bbb V)\to C_k(\frak p_0,\Bbb V).
$$
This is compatible with the Lie algebra homology differentials, which
we therefore both denote by $\partial^*$.

The Killing form $B$ of $\frak g$ has non--degenerate restrictions to
$\frak p_0$ and $\frak q_0$, so from the decomposition
\eqref{g-decomp}, we see that it induces an isomorphism $\frak
p_0\cong\frak p_0^*$ which restricts to a duality between $\frak
q_+\cap\frak p_0$ and $\frak q_-\cap\frak p_0$. This restriction is
exactly the duality we used in Section \ref{2.3} to define
$\partial_\rho$ and of course, the full duality can be used to define
a Lie algebra cohomology differential $\partial_{\frak p_0}$ on
$C_*(\frak p_0,\Bbb V)$.

Having this at hand, the computations in sections 3.3.2 and 3.3.3 of
\cite{book} can be used without any change in our situation. To
formulate the result, we observe that any element $X\in\frak p_0$
naturally acts on $\Bbb V$. Likewise, $X$ acts on $\frak p_0$ by the
adjoint action and this induces an action of $\La^*\frak
p_0$. Together, these actions determine an action on $C_*(\frak
p_0,\Bbb V)$, and we write $\Cal L_X$ for the action of $X$. We can
decompose $\Cal L_X=\Cal L^{\frak p_0}_X+\Cal L^{\Bbb V}_X$ where in
the first part $X$ acts on $\La^*\frak p_0$ only, while in the second
part it acts only on $\Bbb V$. Using this, we can formulate the result
as follows.

\begin{prop}\label{prop2.6}
  Let $\{\xi_\ell\}$ be a basis for $\frak p_0$, which is the union of
  a basis of $\frak q_-\cap\frak p_0$ and of a basis of $\frak
  q\cap\frak p_0$ and let $\{\eta_\ell\}$ be the dual basis with
  respect to $B$. Let $j:C_*(\frak q_+\cap\frak p_0,\Bbb V)\to
  C_*(\frak p_0,\Bbb V)$ be the inclusion. Then we have
$$
j\o\square_\rho=\tfrac12\left(-\textstyle\sum_\ell\Cal L^{\Bbb V}_{\eta_\ell}\Cal
L^{\Bbb V}_{\xi_\ell}-\sum_{\ell:\xi_\ell\in\frak q_-}\Cal L_{\eta_\ell}\Cal
L_{\xi_\ell}+\sum_{\ell:\xi_\ell\in\frak q}\Cal L_{\eta_\ell}\Cal
L_{\xi_\ell}\right)\o j.
$$
\end{prop}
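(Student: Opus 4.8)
My plan is to recognize the asserted identity as Kostant's formula for the Kostant Laplacian of a parabolic subalgebra, applied to the reductive Lie algebra $\frak p_0$ in place of $\frak g$, and then to invoke the computation of sections 3.3.2 and 3.3.3 of \cite{book}. By \eqref{g-decomp} we have $\frak p_0=(\frak q_-\cap\frak p_0)\oplus\frak q_0\oplus(\frak q_+\cap\frak p_0)$, where $\frak q\cap\frak p_0=\frak q_0\oplus(\frak q_+\cap\frak p_0)$ is a parabolic subalgebra of $\frak p_0$ with nilradical $\frak q_+\cap\frak p_0$ and opposite nilradical $\frak q_-\cap\frak p_0$, while $\Bbb V$ --- being a completely reducible $\frak p$--module, hence trivial on $\frak p_+$ --- is a module over $\frak p_0$. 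Under the isomorphism $\frak q_+/\frak p_+\cong\frak q_+\cap\frak p_0$ from Section \ref{2.6}, the complex $(C_*(\frak q_+/\frak p_+,\Bbb V),\partial^*_\rho)$ becomes the standard Lie algebra homology complex of $\frak q_+\cap\frak p_0$ with coefficients in $\Bbb V$, and $j$ intertwines $\partial^*_\rho$ with the homology differential of $\frak p_0$, as already recorded there; under the $B$--duality $\frak q_+\cap\frak p_0\cong(\frak q_-\cap\frak p_0)^*$ the map $\partial_\rho$ becomes exactly the Chevalley--Eilenberg (``Kostant'') differential used in \cite{book}. It is worth stressing that $j$ does \emph{not} intertwine $\partial_\rho$ with $\partial_{\frak p_0}$, so the argument has to be run with this explicit low--level description of the two differentials rather than with the differentials of the full complex $C_*(\frak p_0,\Bbb V)$.

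Having this dictionary, I would fix a basis $\{\xi_\ell\}$ of $\frak p_0$ adapted to the splitting $\frak p_0=(\frak q_-\cap\frak p_0)\oplus(\frak q\cap\frak p_0)$, with $B$--dual basis $\{\eta_\ell\}$; then $\eta_\ell\in\frak q_+\cap\frak p_0$ when $\xi_\ell\in\frak q_-\cap\frak p_0$, $\eta_\ell\in\frak q_0$ when $\xi_\ell\in\frak q_0$, and $\eta_\ell\in\frak q_-\cap\frak p_0$ when $\xi_\ell\in\frak q_+\cap\frak p_0$, so that $\{\ell:\xi_\ell\in\frak q_-\}$ and $\{\ell:\xi_\ell\in\frak q\}$ are precisely the two index sets in the statement. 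Writing $\partial^*_\rho$ (after applying $j$) and $\partial_\rho$ in this basis --- the linear terms being a contraction, resp.\ an exterior multiplication, followed by the $\Bbb V$--action, and the quadratic terms the corresponding bracket, resp.\ cobracket, contributions --- I would expand $\square_\rho=\partial_\rho\o\partial^*_\rho+\partial^*_\rho\o\partial_\rho$ exactly as in \cite{book}: using that the $\Bbb V$--action is a Lie algebra homomorphism, the graded anticommutation relations between exterior multiplications and contractions, the fact that $\sum_\ell\epsilon(\eta_\ell)\iota(\xi_\ell)$ together with the adjoint action of $\frak p_0$ reassembles into the operators $\Cal L^{\frak p_0}_X$, and the Jacobi identity to collapse the double--bracket terms, the part of the expansion that only involves the $\Bbb V$--action yields $-\tfrac12\sum_\ell\Cal L^{\Bbb V}_{\eta_\ell}\Cal L^{\Bbb V}_{\xi_\ell}$, and the remaining part assembles into $-\tfrac12\sum_{\xi_\ell\in\frak q_-}\Cal L_{\eta_\ell}\Cal L_{\xi_\ell}+\tfrac12\sum_{\xi_\ell\in\frak q}\Cal L_{\eta_\ell}\Cal L_{\xi_\ell}$. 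The one structural point to check is that this entire computation uses only that $B|_{\frak p_0}$ is nondegenerate and $\operatorname{ad}(\frak p_0)$--invariant (through identities such as $B([X,Y],Z)=-B(Y,[X,Z])$, which control how structure constants transform under $\xi_\ell\mapsto\eta_\ell$), and never that $B|_{\frak p_0}$ is the Killing form of $\frak p_0$ itself; both hold here (nondegeneracy is recorded in Section \ref{2.6}), so the argument of \cite{book} goes through verbatim with $\frak g$ replaced by $\frak p_0$ and $\frak g_0$ by $\frak q_0$.

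I expect the main difficulty to lie not in this conceptual reduction but in the sign and index bookkeeping: one must keep the three types of pairs $(\xi_\ell,\eta_\ell)$ relative to the $\frak q$--grading straight, track signs correctly when the two identifications are substituted into the formulas for $\partial^*_\rho$ and $\partial_\rho$, and --- the subtlest point --- observe that the individual operators $\Cal L_{\eta_\ell}\Cal L_{\xi_\ell}$ on the right--hand side do not preserve $\im(j)$ (they involve the ``extra'' directions $\frak q_-\cap\frak p_0$ and $\frak q_0$ of $\frak p_0$), whereas the precise signed combination in the statement does, its restriction to $\im(j)$ reproducing $j\o\square_\rho$. This is exactly why the identity is phrased in the form $j\o\square_\rho=(\,\cdots)\o j$ rather than as an identity of endomorphisms of $C_*(\frak p_0,\Bbb V)$.
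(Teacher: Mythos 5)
Your proposal is correct and follows essentially the same route as the paper, whose entire argument for Proposition \ref{prop2.6} is the assertion (in the paragraph preceding the statement) that the computations of sections 3.3.2 and 3.3.3 of \cite{book} apply without change to the reductive algebra $\frak p_0$ with parabolic $\frak q\cap\frak p_0$ and the form $B|_{\frak p_0}$. Your explicit check that those computations use only nondegeneracy and $\operatorname{ad}(\frak p_0)$--invariance of $B|_{\frak p_0}$ --- and not that it is the Killing form of $\frak p_0$ itself --- is precisely the point that justifies the paper's ``without any change'' claim, and is a worthwhile addition.
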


\subsection{The action on isotypical components}\label{2.7} 
To continue, we restrict to the complex case, i.e.~we assume that
$\frak q\subset\frak p\subset\frak g$ are nested parabolic subalgebras
in a complex semisimple Lie algebra $\frak g$ and that $\Bbb V$ is a
complex irreducible representation of $\frak p_0$. This means that
$\Bbb V$ is a complex irreducible representation of the semisimple part
$\frak p_0^s$ of $\frak p_0$ on which the center $\frak z(\frak p_0)$
acts diagonalizably.

Usually, one describes such representations by highest weights, but in
our setting it is better to use the negatives of lowest weights.  Such
a weight is a complex linear functional $\la$ on the Cartan subalgebra
$\frak h$ of $\frak g$. (Recall that $\frak h$ naturally decomposes as
the direct sum of the center $\frak z(\frak p_0)$ and a Cartan
subalgebra for $\frak p_0^s$.) The functionals occurring in this way
are exactly the $\frak p$--algebraically integral ones which in
addition are $\frak p$--dominant, which means that
$\langle\la,\al\rangle$ is a non--negative integer for all positive
roots $\al$ such that $\frak g_\al\subset\frak p_0$.

As we have noted already, we can view the chain spaces as $C_*(\frak
p_0\cap\frak q_+,V)$ and they are completely reducible $\frak
q_0$--modules. Now irreducible representations of $\frak q_0$ are also
determined by the negatives of their lowest weights which again are
linear functionals on $\frak h$. Here, they have to be $\frak
q$--algebraically integral and $\frak q$--dominant, so the condition
that $\langle\la,\al\rangle$ is a non--negative integer is only
required if $\frak g_\al\subset\frak q_0$. In particular, for a $\frak
q$--dominant integral weight $\nu$, there is the $\frak
q_0$--isotypical component $W^\nu\subset C_*(\frak p_0\cap\frak
q_+,V)$ of lowest weight $-\nu$, which is the $\frak q_0$--submodule
generated by all $\frak q_0$--lowest weight vectors of that weight.

Now we can compute the action of $\square_\rho$ on any isotypical
component. To formulate the result, we denote by $\delta_{\frak p}$
the \textit{lowest form} of the semisimple Lie algebra $\frak p_0^s$,
i.e.~half of the sum of its positive roots.

\begin{cor}\label{cor2.7}
Let $\frak q\subset\frak p$ be nested standard parabolic subalgebras
in a complex semisimple Lie algebra $\frak g$ and let $\Bbb V$ be a
complex irreducible representation of $\frak p$ with lowest weight
$-\la\in\frak h^*$. Let $W^{\nu}\subset C_*(\frak p_0\cap\frak
q_-,\Bbb V)$ be the $\frak q_0$--isotypical component of lowest weight
$-\nu$.

Then $\square_\rho$ acts on $W^{\nu}$ by multiplication by the scalar
$\frac12(\|\la+\delta_{\frak p}\|^2-\|\nu+\delta_{\frak p}\|^2)$,
where the norm is induced by the Killing form of $\frak g$.
\end{cor}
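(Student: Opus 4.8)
The plan is to deduce this from Proposition \ref{prop2.6} by recognizing the three sums occurring there as Casimir operators and evaluating them on the isotypical component $W^\nu$. By that proposition, it suffices to compute the scalar by which the operator
$$
T:=-\sum\nolimits_\ell\Cal L^{\Bbb V}_{\eta_\ell}\Cal L^{\Bbb V}_{\xi_\ell}-\sum\nolimits_{\ell:\xi_\ell\in\frak q_-}\Cal L_{\eta_\ell}\Cal L_{\xi_\ell}+\sum\nolimits_{\ell:\xi_\ell\in\frak q}\Cal L_{\eta_\ell}\Cal L_{\xi_\ell}
$$
acts on $j(W^\nu)\subset C_*(\frak p_0,\Bbb V)$. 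Here the basis $\{\xi_\ell\}$ is adapted to the decomposition $\frak p_0=(\frak q_-\cap\frak p_0)\oplus\frak q_0\oplus(\frak q_+\cap\frak p_0)$ coming from \eqref{g-decomp}; in particular $\frak q\cap\frak p_0=\frak q_0\oplus(\frak q_+\cap\frak p_0)$ is a parabolic subalgebra of $\frak p_0$ with nilradical $\frak q_+\cap\frak p_0$ and Levi factor $\frak q_0$, while $B|_{\frak p_0}$ pairs $\frak q_+\cap\frak p_0$ with $\frak q_-\cap\frak p_0$ and is nondegenerate on $\frak q_0$. So what is to be done is exactly Kostant's computation of the Laplacian eigenvalue for the parabolic $\frak q\cap\frak p_0\subset\frak p_0$ with coefficients in $\Bbb V$, carried out using the form $B|_{\frak p_0}$ throughout.

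First I would treat the term $\sum_\ell\Cal L^{\Bbb V}_{\eta_\ell}\Cal L^{\Bbb V}_{\xi_\ell}$: since $\{\eta_\ell\}$ is the $B$-dual of the basis $\{\xi_\ell\}$ of $\frak p_0$, this is the Casimir operator of $\frak p_0$ with respect to $B|_{\frak p_0}$ acting on $\Bbb V$, hence a scalar because $\Bbb V$ is $\frak p_0$-irreducible. By the standard Casimir eigenvalue formula applied to the semisimple part $\frak p_0^s$ (the center of $\frak p_0$ contributing the obvious extra square), an irreducible $\frak p_0$-module with lowest weight $-\la$ has Casimir eigenvalue $\|\la+\delta_{\frak p}\|^2-\|\delta_{\frak p}\|^2$, the norm being the one induced by $B$, which agrees on $\frak h$ with the one induced by the Killing form of $\frak g$.

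For the remaining two sums I would split $\{\xi_\ell\}$ into bases of $\frak q_-\cap\frak p_0$, of $\frak q_0$ and of $\frak q_+\cap\frak p_0$, and arrange the $B$-dual bases so that the dual basis $\{b^j\}$ of $\frak q_+\cap\frak p_0$ is the chosen basis $\{b_j\}$ of $\frak q_-\cap\frak p_0$ and conversely. The $\frak q_0\times\frak q_0$ part of $\sum_{\xi_\ell\in\frak q}\Cal L_{\eta_\ell}\Cal L_{\xi_\ell}$ is then the Casimir of $\frak q_0$ acting on $C_*(\frak p_0,\Bbb V)$ by the total action $\Cal L$, while the remaining cross terms collapse, using that $\Cal L$ is a Lie algebra action, to $\sum_j\Cal L_{[b^j,b_j]}$. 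A short trace computation identifies $\sum_j[b_j,b^j]$ with twice the element of $\frak h$ that is $B$-dual to the half-sum $\delta_{\frak p}-\delta_{\frak q}$ of the roots in $\frak q_+\cap\frak p_0$ (writing $\delta_{\frak q}$ for the lowest form of $\frak q_0^s$); since that half-sum is fixed by the Weyl group of $\frak q_0^s$, this element lies in $\frak z(\frak q_0)$. Thus, as $\frak q_0$-module endomorphisms of $j(C_*(\frak q_+\cap\frak p_0,\Bbb V))$, the last two terms of $T$ together equal the Casimir of $\frak q_0$ plus a constant multiple of the action of that central element; note that, although the two sums individually do not preserve $j(C_*(\frak q_+\cap\frak p_0,\Bbb V))$, this combination does, consistently with Proposition \ref{prop2.6}.

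Finally I would evaluate on $W^\nu$. Since $j$ is $\frak q_0$-equivariant, the Casimir of $\frak q_0$ acts there by $\|\nu+\delta_{\frak q}\|^2-\|\delta_{\frak q}\|^2$, while the central element, acting by its value on a lowest weight vector of weight $-\nu$, contributes a multiple of $\langle\nu,\delta_{\frak p}-\delta_{\frak q}\rangle$. Using the orthogonal decomposition $\delta_{\frak p}=\delta_{\frak q}+(\delta_{\frak p}-\delta_{\frak q})$ — orthogonal because $\delta_{\frak p}-\delta_{\frak q}$ is invariant under the Weyl group of $\frak q_0^s$ whereas $\delta_{\frak q}$ lies in the span of the roots of $\frak q_0^s$ — one has $\|\nu+\delta_{\frak q}\|^2+2\langle\nu,\delta_{\frak p}-\delta_{\frak q}\rangle=\|\nu+\delta_{\frak p}\|^2-\|\delta_{\frak p}-\delta_{\frak q}\|^2$ and $\|\delta_{\frak q}\|^2+\|\delta_{\frak p}-\delta_{\frak q}\|^2=\|\delta_{\frak p}\|^2$, so completing the square makes all the $\delta$-norm terms cancel against those coming from the first sum, and collecting everything yields the scalar $\tfrac12(\|\la+\delta_{\frak p}\|^2-\|\nu+\delta_{\frak p}\|^2)$. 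The point requiring the most care is the sign and normalization bookkeeping: the exact constant in the trace identity for $\sum_j[b_j,b^j]$, the correct sign of the value of the central element on a lowest weight vector, and the verification that $B|_{\frak p_0}$ (rather than the intrinsic Killing form of $\frak p_0^s$) is the form for which the Casimir eigenvalues come out in the norm appearing in the statement. The structural fact that makes all of this go through verbatim is the one isolated above — that the relative Kostant operators $\partial^*_\rho,\partial_\rho,\square_\rho$ are precisely the ordinary Kostant operators of the parabolic $\frak q\cap\frak p_0\subset\frak p_0$ built from $B|_{\frak p_0}$ — so that Kostant's isotypical-component argument (as in sections 3.3.2 and 3.3.3 of \cite{book}) applies without change.
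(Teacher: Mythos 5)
Your proposal is correct and follows essentially the same route as the paper: both reduce to Proposition \ref{prop2.6} with dual bases adapted to the decomposition $\Delta^+(\frak p_0)=\Delta^+(\frak q_0)\sqcup\Delta^+(\frak q_+\cap\frak p_0)$ and then carry out Kostant's eigenvalue computation (Proposition 3.3.4 of \cite{book}) for the parabolic $\frak q\cap\frak p_0\subset\frak p_0$ with respect to $B|_{\frak p_0}$. The only difference is organizational: you package the evaluation via abstract Casimir eigenvalues together with the orthogonality $\langle\delta_{\frak q},\delta_{\frak p}-\delta_{\frak q}\rangle=0$, whereas the paper evaluates directly on a lowest weight vector using explicit root vectors $E_\al,F_\al$ --- the underlying identities and the completion of squares are the same.
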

\begin{proof}
To prove the result, it suffices to show that $\square_ \rho$ acts on
a lowest weight vector of weight $-\nu$ by multiplication with the
scalar in question. We use the formula for $\square_\rho$ from
Proposition \ref{prop2.6} with respect to appropriately chosen dual
bases. Recall that the Cartan subalgebra $\frak h$ of $\frak g$ is
contained in $\frak p_0$ and that $B$ is positive definite on the
subspace $\frak h_0\subset\frak h$ on which all roots (of $\frak g$)
are real. We start by choosing an orthonormal basis
$\{H_1,\dots,H_r\}$ for $\frak h_0$ which then is a complex basis
for $\frak h$. Next, let $\Delta^+(\frak p_0)$ be the set of those
positive roots $\al$ of $\frak g$ for which the root space $\frak
g_\al$ is contained in $\frak p_0$. For each $\al\in\Delta^+(\frak
p_0)$, we choose elements $E_\al\in\frak g_\al$ and $F_\al\in\frak
g_{-\al}$ such that $B(E_\al,F_\al)=1$. Then $\{E_\al,H_i,F_\al\}$ is
a basis for $\frak p_0$ whose dual basis with respect to $B$ is given
by $\{F_\al,H_i,E_\al\}$. Moreover, for each $\al$ we see that
$[E_\al,F_\al]$ is dual to $\al$ with respect to $B$.

Having chosen these bases, one completes the proof as for Proposition
3.3.4 in \cite{book}, using the evident decomposition $\Delta^+(\frak
p_0)=\Delta^+(\frak q_0)\sqcup\Delta^+(\frak q_+\cap\frak p_0)$
according to location of the root spaces.
\end{proof}

\subsection{The relative Hasse diagram}\label{2.8}
The statement of Kostant's theorem is based on a subset in the Weyl
group $W$ of $\frak g$, the so--called Hasse diagram associated to a
parabolic subalgebra. We next introduce a relative version of
this. Recall that for a parabolic subalgebra, one defines the Weyl
group as the Weyl group of the semisimple part of a Levi factor, which
is naturally a subgroup of $W$ and thus acts on $\frak h^*$. In our
situation of two nested parabolics $\frak q\subset\frak p\subset\frak
g$ we thus have $W_{\frak q}\subset W_{\frak p}\subset W$.

Recall further that denoting by $\Delta^+$ the set of positive roots
of $\frak g$, one associates to $w\in W$ the subset
$\Ph_w:=\{\al\in\Delta^+:w^{-1}(\al)\in-\Delta^+\}\subset\Delta^+$
which uniquely determines $w$. Using a notation based on the location
of root spaces as in Section \ref{2.7} we can write
$\Delta^+=\Delta^+(\frak p_0)\sqcup\Delta^+(\frak p_+)$ and we can
further decompose $\Delta^+(\frak p_0)=\Delta^+(\frak
q_0)\sqcup\Delta^+(\frak p_0\cap\frak q_+)$. It is well known (see
section 3.2.15 of \cite{book}) that $w\in W_{\frak p}$ if and only if
$\Ph_w\subset\Delta^+(\frak p_0)$ and likewise for $W_{\frak q}$. The
\textit{Hasse diagram} $W^{\frak p}$ associated to the parabolic
subalgebra $\frak p$ is defined as the set of those $w\in W$ for which
$\Ph_w\subset\Delta^+(\frak p_+)$, and likewise for $W^{\frak q}$.

\begin{definition}\label{def2.8}
  For two nested parabolic subalgebras $\frak q\subset\frak p$ in a
  complex semisimple Lie algebra $\frak g$, we define the
  \textit{relative Hasse diagram} $W^{\frak q}_{\frak p}\subset W$ by 
$$
W^{\frak q}_{\frak
  p}:=\{w\in W:\Ph_w\subset\Delta^+(\frak p_0\cap\frak q_+)\}
$$
\end{definition}

From the above discussion, we conclude that $W^{\frak q}_{\frak p}$
coincides with the intersection $W^{\frak q}\cap W_{\frak p}$ of the
Hasse diagram of $\frak q$ with the Weyl group of $\frak p$. On the
other hand, as a set $W^{\frak q}_{\frak p}$ can be identified with
the Hasse diagram of the parabolic subalgebra $(\frak q\cap\frak
p_0^s)\subset\frak p_0^s$. The definition we have chosen has the
advantage that $W^{\frak q}_{\frak p}$ naturally acts on all of $\frak
h^*$.

Relative Hasse diagrams can be determined in a similar way as usual
ones and some of the basic properties carry over to the relative case.
\begin{lemma}\label{lem2.8}
  Let $\frak q\subset\frak p\subset\frak g$ be two nested parabolic
  subalgebras in a complex semisimple Lie algebra and let $W$ be the
  Weyl group of $\frak g$. 

  (1) Let $\delta^{\frak q}_{\frak p}$ denote the sum of all fundamental
  weights corresponding to simple roots contained in $\Delta^+(\frak
  p_0\cap\frak q_+)$. Then the map $w\mapsto w^{-1}(\delta^{\frak
    q}_{\frak p})$ restricts to a bijection between $W^{\frak
    q}_{\frak p}$ and the orbit of $\delta^{\frak q}_{\frak p}$ under
  $W_{\frak p}$.

(2) For any $\frak p$--dominant weight $\la$ and any element $w\in
W^{\frak q}_{\frak p}\subset W$, the weight $w(\la)$ is $\frak
q$--dominant.
\end{lemma}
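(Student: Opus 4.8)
The plan is to reduce both assertions to the standard facts about ordinary Hasse diagrams, using the two identifications recorded right after Definition \ref{def2.8}: $W^{\frak q}_{\frak p}=W^{\frak q}\cap W_{\frak p}$, and the identification of $W^{\frak q}_{\frak p}$ with the Hasse diagram of the parabolic $\frak q\cap\frak p_0^s$ inside the semisimple Lie algebra $\frak p_0^s$. Throughout I write $\Sigma(\frak p_0)$, resp.\ $\Sigma(\frak q_0)$, for the set of simple roots of $\frak g$ whose root spaces lie in $\frak p_0$, resp.\ in $\frak q_0$; these are the simple roots of the semisimple parts of the Levi factors of $\frak p$, resp.\ of $\frak q$, and $\Sigma(\frak q_0)\subset\Sigma(\frak p_0)$.

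For (1) the first step is to describe $\delta^{\frak q}_{\frak p}$ precisely. The decomposition $\frak p_0=(\frak p_0\cap\frak q_-)\oplus\frak q_0\oplus(\frak p_0\cap\frak q_+)$ from \eqref{g-decomp} shows that a simple root in $\Sigma(\frak p_0)$ has its root space in $\frak p_0\cap\frak q_+$ exactly when it does not lie in $\Sigma(\frak q_0)$; hence $\delta^{\frak q}_{\frak p}=\sum_{\al\in\Sigma(\frak p_0)\setminus\Sigma(\frak q_0)}\om_\al$, with $\om_\al$ the fundamental weight of $\frak g$ for $\al$. From $\langle\om_\al,\be^\vee\rangle=\delta_{\al\be}$ for simple roots $\al,\be$ one reads off that $\langle\delta^{\frak q}_{\frak p},\be^\vee\rangle$ equals $1$ if $\be\in\Sigma(\frak p_0)\setminus\Sigma(\frak q_0)$ and $0$ if $\be\in\Sigma(\frak q_0)$. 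Consequently $\delta^{\frak q}_{\frak p}$ is fixed by $W_{\frak q}$ and is dominant for $W_{\frak p}$, and---$\delta^{\frak q}_{\frak p}$ being dominant---its stabilizer in $W_{\frak p}$ is the standard parabolic subgroup generated by those simple reflections $s_\be$ ($\be\in\Sigma(\frak p_0)$) which fix it, i.e.\ exactly $W_{\frak q}$. Hence the assignment $W_{\frak q}w\mapsto w^{-1}(\delta^{\frak q}_{\frak p})$ is a well-defined bijection from the set $W_{\frak q}\backslash W_{\frak p}$ of right cosets onto the orbit $W_{\frak p}\cdot\delta^{\frak q}_{\frak p}$. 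The second step is to invoke, for the semisimple Lie algebra $\frak p_0^s$ with its parabolic $\frak q\cap\frak p_0^s$, the standard fact (section 3.2 of \cite{book}) that the elements of the Hasse diagram are precisely the minimal length representatives of these right cosets, one from each. Composing the two bijections yields (1); the chosen convention for $\Ph_w$ is exactly what makes $w^{-1}$, rather than $w$, appear.

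For (2), let $w\in W^{\frak q}_{\frak p}$ and $\al\in\Delta^+(\frak q_0)$. Since $\langle w(\la),\al^\vee\rangle=\langle\la,w^{-1}(\al^\vee)\rangle$, it suffices to show $w^{-1}(\al)\in\Delta^+(\frak p_0)$, because then $\frak p$--dominance of $\la$---which says precisely that $\langle\la,\ga^\vee\rangle\geq 0$ for every $\ga\in\Delta^+(\frak p_0)$---gives $\langle\la,w^{-1}(\al^\vee)\rangle\geq 0$. For the former: by Definition \ref{def2.8}, $\Ph_w\subset\Delta^+(\frak p_0\cap\frak q_+)$, and this set is disjoint from $\Delta^+(\frak q_0)$ in the decomposition $\Delta^+(\frak p_0)=\Delta^+(\frak q_0)\sqcup\Delta^+(\frak p_0\cap\frak q_+)$, so $\al\notin\Ph_w$; by the very definition of $\Ph_w$ this forces $w^{-1}(\al)\in\Delta^+$, and as $w\in W_{\frak p}$ permutes the root system of $\frak p_0^s$ (which contains $\al$), in fact $w^{-1}(\al)\in\Delta^+(\frak p_0)$. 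Since $\al\in\Delta^+(\frak q_0)$ was arbitrary, $w(\la)$ is $\frak q$--dominant.

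The only genuine work lies in (1): one must verify the stabilizer claim for $\delta^{\frak q}_{\frak p}$---which is defined through the fundamental weights of $\frak g$ rather than those of $\frak p_0^s$, so a priori one might worry about components orthogonal to the roots of $\frak p_0^s$---and one must be careful to apply the ``Hasse diagram $=$ minimal length coset representatives'' statement to $\frak p_0^s$ and not to $\frak g$. Part (2) is purely a matter of unravelling the definitions of $\Ph_w$, of $W^{\frak q}_{\frak p}$, and of $\frak p$--dominance.
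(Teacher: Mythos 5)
Your proof is correct and follows essentially the same route as the paper: identify $W_{\frak q}$ as the stabilizer of $\delta^{\frak q}_{\frak p}$ in $W_{\frak p}$ and then invoke the standard coset-decomposition/Hasse-diagram fact for $\frak q\cap\frak p_0^s\subset\frak p_0^s$ (the paper phrases the latter as the unique factorization $w=w_1w_2$ with $w_1\in W_{\frak q}$, $w_2\in W^{\frak q}_{\frak p}$, and obtains the stabilizer directly from the $\Ph_w$--characterization rather than from the general stabilizer-of-a-dominant-weight theorem). Your argument for (2) is also the paper's, with the welcome extra care of noting that $w^{-1}(\al)$ lies in $\Delta^+(\frak p_0)$, not merely in $\Delta^+$, which is what $\frak p$--dominance of $\la$ actually requires.
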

\begin{proof}
  (1) By construction, $\delta^{\frak q}_{\frak p}$ is orthogonal to each
  simple root $\al\in\Delta^+(\frak q_0)$ and thus it is stabilized by
  the reflection corresponding to such a root. Since these reflections
  generate $W_{\frak q}$, we see that $w(\delta^{\frak q}_{\frak
    p})=\delta^{\frak q}_{\frak p}$ for any $w\in W_{\frak
    q}\subset W_{\frak p}$. 

Conversely, if $w\in W$ satisfies $w(\delta^{\frak q}_{\frak
  p})=\delta^{\frak q}_{\frak p}$, then for a root $\al$, we have
$$
\langle \delta^{\frak q}_{\frak p},w^{-1}(\al)\rangle=\langle
w(\delta^{\frak q}_{\frak p}),\al\rangle=\langle \delta^{\frak q}_{\frak
  p},\al\rangle,
$$
so this is positive for each $\al\in\Delta^+(\frak q_+)$. But this
implies that $\Ph_w\subset\Delta^+(\frak q_0)$ and hence $w\in
W_{\frak q}$. 

  Applying Proposition 5.13 of \cite{Kostant} (Proposition 3.2.15 in
  \cite{book}) to the parabolic subalgebra $\frak q\cap\frak
  p_0^s\subset\frak p_0^s$, we conclude that any element $w\in W_{\frak
    p}$ can be uniquely written as $w=w_1w_2$ with $w_1\in W_{\frak
    q}$ and $w_2\in W^{\frak q}_{\frak p}$. Hence $w^{-1}(\delta^{\frak
    q}_{\frak p})=w_2^{-1}(\delta^{\frak q}_{\frak p})$, and we conclude
  that $w\mapsto w^{-1}(\delta^{\frak q}_{\frak p})$ defines a surjection
  from $W^{\frak q}_{\frak p}$ onto the $W_{\frak p}$--orbit of
  $\delta^{\frak q}_{\frak p}$. But if $w,\tilde w\in W^{\frak q}_{\frak
    p}$ satisfy $w^{-1}(\delta^{\frak q}_{\frak p})=\tilde
  w^{-1}(\delta^{\frak q}_{\frak p})$, then $w\tilde w^{-1}$ fixes
  $\delta^{\frak q}_{\frak p}$ and hence lies in $W_{\frak q}$. If $w$
  and $\tilde w$ were different, then $w=(w\tilde w^{-1})\tilde w$
  would contradict uniqueness of the product decomposition. 

\medskip

(2) For $\al\in\Delta^+(\frak q_0)$ and $w\in W^{\frak q}_{\frak p}$, we
by definition know that $w^{-1}(\al)\in\Delta^+(\frak p)$. Hence for a
a $\frak p$--dominant weight $\la$ we have 
$$
0\leq \langle\la,w^{-1}(\al)\rangle=\langle w(\la),\al\rangle,
$$
so $w(\la)$ is $\frak q$--dominant. 
\end{proof}

We will give an example for determining $W^{\frak q}_{\frak p}$ in
Example \ref{ex3.2} below.

\subsection{The relative version of Kostant's theorem}\label{2.9}  
The last ingredient needed to formulate the first main result of this
article is the affine action of the Weyl group on weights. For a
weight $\la$ and $w\in W$, we define $w\cdot\la=w(\la+\delta)-\delta$,
where $\delta=\frac12\sum_{\al\in\Delta^+}\al$ is the lowest form of
the Lie algebra $\frak g$. This has the nice property that $w\cdot
0=w(\delta)-\delta=-\sum_{\al\in\Ph_w}\al$, see Proposition 3.2.14 in
\cite{book}. From the proof of this Proposition and the fact that an
element $w\in W_{\frak p}$ has to preserve the set $\Delta^+(\frak
p_+)$ one easily concludes that $w\cdot 0=w(\delta_{\frak
  p})-\delta_{\frak p}$ for $w\in W_{\frak p}$. In particular, for
$w\in W^{\frak q}_{\frak p}$, we obtain $w\cdot\la=w(\la+\delta_{\frak
  p})-\delta_{\frak p}$.  If $\la$ is $\frak p$--dominant, then for
any $\al\in\Delta^+(\frak p_0)$ we have $\langle\la+\delta_{\frak
  p},\al\rangle\geq 1$ and using this, one concludes as in the proof
of Lemma \ref{lem2.8} that $w\cdot\la$ is always a $\frak q$--dominant
weight. Using this, we can now formulate:

\begin{thm}\label{thm2.9}[Relative version of Kostant's theorem] Consider two
  nested complex parabolic subalgebras $\frak q\subset\frak p$ in a
  complex semisimple Lie algebra $\frak g$. Then for a finite
  dimensional complex irreducible representation $\Bbb V$ of $\frak p$
  with lowest weight $-\la\in\frak h^*$, the homology space $H_*(\frak
  q_+/\frak p_+,\Bbb V)$ is a completely reducible representation of
  $\frak q_0$ with the following structure:

For a $\frak q$--dominant weight $\nu$, the isotypical component
$H_*(\frak q_+/\frak p_+,\Bbb V)^\nu$ of lowest weight $-\nu$ is
non--zero if and only if $\nu=w\cdot\la$ for some $w\in W^{\frak
  q}_{\frak p}$. If this is the case, then the isotypical component is
irreducible and contained in $H_{\ell(w)}(\frak q_+/\frak p_+,\Bbb
V)$, where $\ell(w)$ is the length of $w$. The weight $-w\cdot\la$
even occurs with multiplicity one in $\La^*(\frak q_+/\frak
p_+)\otimes\Bbb V$.
\end{thm}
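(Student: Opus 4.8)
The plan is to combine the algebraic Hodge decomposition of Lemma~\ref{lem2.4} with the explicit scalar action of $\square_\rho$ from Corollary~\ref{cor2.7}, reducing the statement to a purely combinatorial assertion about the weights occurring in the chain space $C_*(\frak p_0\cap\frak q_+,\Bbb V)\cong\La^*(\frak q_+/\frak p_+)\otimes\Bbb V$; that assertion is then the combinatorial core of Kostant's original argument, applied to the semisimple Lie algebra $\frak p_0^s$ (equivalently, the reductive algebra $\frak p_0$) with its parabolic subalgebra $\frak q\cap\frak p_0^s$.

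First I would note that, by Lemma~\ref{lem2.4}, the $\frak q_0$--module $H_k(\frak q_+/\frak p_+,\Bbb V)$ is isomorphic to $\ker(\square_\rho)\subset C_k(\frak p_0\cap\frak q_+,\Bbb V)$. Since $\ker(\square_\rho)$ is a $\frak q_0$--submodule of the completely reducible $\frak q_0$--module $C_*(\frak p_0\cap\frak q_+,\Bbb V)$, for a $\frak q$--dominant weight $\nu$ its isotypical component of lowest weight $-\nu$ is exactly $\ker(\square_\rho)\cap W^\nu$, where $W^\nu$ is the $\frak q_0$--isotypical component of the chain space. By Corollary~\ref{cor2.7}, $\square_\rho$ acts on $W^\nu$ by the single scalar $\tfrac12(\|\la+\delta_{\frak p}\|^2-\|\nu+\delta_{\frak p}\|^2)$; hence $H_*(\frak q_+/\frak p_+,\Bbb V)^\nu\neq\{0\}$ precisely when $W^\nu\neq\{0\}$ and $\|\nu+\delta_{\frak p}\|=\|\la+\delta_{\frak p}\|$, and in that case $W^\nu\subset\ker(\square_\rho)$. (Complete reducibility of $H_*$ as a $\frak q_0$--module, indeed as a $\frak q$--module, is Proposition~\ref{prop2.2}, and $\frak q$--dominance of $w\cdot\la$ for $w\in W^{\frak q}_{\frak p}$ was checked in Section~\ref{2.9}.)

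It then remains to determine, among $\frak q$--dominant weights $\nu$, those with $W^\nu\neq\{0\}$ and $\|\nu+\delta_{\frak p}\|=\|\la+\delta_{\frak p}\|$. Here I would use that $\frak q_+\cap\frak p_0\cong\frak q_+/\frak p_+$ is the nilradical of the parabolic subalgebra $\frak q\cap\frak p_0^s\subset\frak p_0^s$, that $\delta_{\frak p}$ is the lowest form of $\frak p_0^s$, that $W^{\frak q}_{\frak p}$ is the Hasse diagram of $\frak q\cap\frak p_0^s$ (Section~\ref{2.8}), and that $w\cdot\la=w(\la+\delta_{\frak p})-\delta_{\frak p}$ for $w\in W_{\frak p}$ (Section~\ref{2.9}); the central factor $\frak z(\frak p_0)$ acts by the single scalar $-\la|_{\frak z(\frak p_0)}$ on all of $C_*$ and is fixed pointwise by $W_{\frak p}$, so it plays no role. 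With these identifications, the required statement is literally the combinatorial lemma underlying Kostant's theorem (cf.\ the proof of Theorem~5.14 in \cite{Kostant}, or of Theorem~3.3.5 in \cite{book}), applied to $\frak p_0^s$: for every $\frak q$--dominant $\nu$ with $W^\nu\neq\{0\}$ one has $\|\nu+\delta_{\frak p}\|\leq\|\la+\delta_{\frak p}\|$, with equality if and only if $\nu=w\cdot\la$ for a necessarily unique $w\in W^{\frak q}_{\frak p}$, in which case $W^\nu\subset C_{\ell(w)}$ and the weight $-w\cdot\la$ occurs in $\La^*(\frak q_+/\frak p_+)\otimes\Bbb V$ with multiplicity one. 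Multiplicity one forces $W^\nu=\ker(\square_\rho)\cap W^\nu$ to be $\frak q_0$--irreducible and concentrated in the single degree $\ell(w)$, which finishes the proof.

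The step requiring the most care is the faithful translation of the ambient $\frak g$--data into $\frak p_0^s$--data, so that Kostant's lemma applies unchanged. One must check that $\frak q\cap\frak p_0^s$ is genuinely a parabolic in $\frak p_0^s$ with nilradical $\frak q_+\cap\frak p_0$; that the length $\ell(w)$ computed in $W$ coincides with the one relevant for $\frak p_0^s$ (which holds because $\Ph_w\subset\Delta^+(\frak q_+\cap\frak p_0)$ for $w\in W^{\frak q}_{\frak p}$, so $\ell(w)=|\Ph_w|$ is computed identically in either Weyl group); that the restriction to the real subspace $\frak h_0\subset\frak h$ of the Killing form of $\frak g$, used in Corollary~\ref{cor2.7}, is a positive definite $W_{\frak p}$--invariant form, so that Kostant's norm estimate is available; and that passing from $\frak q_0\cap\frak p_0^s$--modules to $\frak q_0$--modules only records the fixed central character on $\frak z(\frak p_0)$, so that lowest weights carry the same information in either category. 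All of these are immediate from the structure theory recalled in Sections~\ref{2.1} and~\ref{2.8}, but they are precisely the places where the relative setup could in principle differ from Kostant's and therefore deserve explicit verification.
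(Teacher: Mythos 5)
Your proposal is correct, and its overall architecture coincides with the paper's: identify $H_*(\frak q_+/\frak p_+,\Bbb V)$ with $\ker(\square_\rho)$ via Lemma \ref{lem2.4}, use the scalar $\tfrac12(\|\la+\delta_{\frak p}\|^2-\|\nu+\delta_{\frak p}\|^2)$ from Corollary \ref{cor2.7}, and reduce everything to a combinatorial statement about the weights of $\La^*(\frak q_+/\frak p_+)\otimes\Bbb V$. The one genuine difference lies in how that combinatorial core is obtained: you set up a dictionary (parabolic $\frak q\cap\frak p_0^s\subset\frak p_0^s$ with nilradical $\frak q_+\cap\frak p_0$, $\delta_{\frak p}$ as its lowest form, $W^{\frak q}_{\frak p}$ as its Hasse diagram, the central character of $\frak z(\frak p_0)$ split off) and then quote Kostant's lemma for $\frak p_0^s$ verbatim, whereas the paper re-derives the analogue of Kostant's Lemma 5.12 directly in the ambient root system of $\frak g$, following Cartier: it decomposes a weight $-\nu$ as $-\mu+\sum_{\al\in\Ps}\al$ with $\Ps\subset\Delta^+(\frak p_0\cap\frak q_+)$, moves $-\nu-\delta_{\frak p}$ into the $\frak p$--dominant chamber by some $w\in W_{\frak p}$, and extracts the norm inequality, the equality condition $\nu=\nu_{w^{-1}}$ with $w^{-1}\in W^{\frak q}_{\frak p}$, and multiplicity one from the uniqueness of the decomposition. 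Your route is arguably cleaner and buys economy (no re-proof), at the cost of exactly the translation checks you list, all of which are indeed verified by the structure theory in Sections \ref{2.1} and \ref{2.8}; the paper's route keeps all data expressed as functionals on the full $\frak h$ and elements of the full $W$, which is what the later sections actually use. One small point to make explicit: the norm inequality and multiplicity--one statement alone do not yet show that $W^{w\cdot\la}\neq\{0\}$, i.e.\ that $-w\cdot\la$ really is a $\frak q_0$--\emph{lowest} weight of the chain space and not merely a weight of multiplicity one; this requires the additional standard observation (with which the paper closes its proof) that $\|w\cdot\la+\al+\delta_{\frak p}\|>\|\la+\delta_{\frak p}\|$ for $\al\in\Delta^+(\frak q_0)$, so that $-w\cdot\la-\al$ cannot occur as a weight. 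Since this observation is part of the Kostant package you cite, it is a presentational omission rather than a gap.
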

\begin{proof}
We have already noted that the weight $\nu_w:=w\cdot\la$ is $\frak
q$--dominant. We have also seen above that
$\nu_w=w(\la)-\sum_{\al\in\Ph_w}\al$. Now since $-\la$ is the lowest
weight of the irreducible representation $\Bbb V$ of $\frak p_0$, also
$-w(\la)$ is a weight of $\Bbb V$. On the other hand, since $w\in
W^{\frak q}_{\frak p}$, we have $\Ph_w\subset\Delta^+(\frak
p_0\cap\frak q_+)$. This exactly means that each $\al\in\Ph_w$ is a
weight of $\frak q_+/\frak p_+$ and thus $\sum_{\al\in\Ph_w}\al$ is a
weight of $\La^k(\frak q_+/\frak p_+)$, where $k=|\Ph_w|$. It is well
known that $|\Ph_w|$ coincides with the length $\ell(w)$. Hence we
have verified that $-\nu_w$ indeed is a weight of the $\frak
q_0$--representation $\La^{\ell(w)}(\frak q_+/\frak p_+)\otimes\Bbb
V$. On the other hand, since $\nu_w+\delta_{\frak
  p}=w(\la+\delta_{\frak p})$ we see that $\|\nu_w+\delta_{\frak
  p}\|=\|\la+\delta_{\frak p}\|$. Hence by Corollary \ref{cor2.7}, if
$-\nu_w$ actually is a lowest weight in the representation
$\La^{\ell(w)}(\frak q_+/\frak p_+)\otimes\Bbb V$ then its isotypical
component will be contained in $\ker(\square_\rho)$ which is
isomorphic to the homology by Lemma \ref{lem2.4}. 

The main step to complete the proof now is to derive an analog of
Lemma 5.12 of \cite{Kostant} (Lemma 3.3.5 of \cite{book}). This is
rather straightforward along the lines of Cartier's simplified
argument from \cite{Cartier}, so we just sketch it. A weight $-\nu$ of
$\La^*(\frak q_+/\frak p_+)\otimes\Bbb V$ can be written as
\begin{equation}\label{nudecomp}
-\mu+\textstyle\sum_{\al\in\Ps}\al,
\end{equation}
where $-\mu$ is a weight of $\Bbb V$ and $\Ps$ is some subset of
$\Delta^+(\frak p_0\cap\frak q_+)$. Moreover, the multiplicity of
$-\nu$ as a weight coincides with the sum of the multiplicities of the
weights $-\mu$ in $\Bbb V$ over all decompositions of $-\nu$ as in
\eqref{nudecomp}. 

Fixing a weight $-\nu$ decomposed in this way, there is an element
$w\in W_{\frak p}$ such that $w(-\nu-\delta_{\frak p})$ is $\frak
p$--dominant, and of course
$$
w(-\nu-\delta_{\frak p})=w(-\mu)-w(\delta_{\frak p}-\textstyle\sum_{\al\in\Ps}\al).
$$ 
Now $w(-\mu)$ is a weight of $\Bbb V$ and thus can be obtained from
$-\la$ by adding a linear combination of simple roots from
$\Delta^+(\frak p_0)$ with non--negative integral coefficients.  As in
the proof of Lemma 3.3.5 of \cite{book}, one next shows that
$w(\delta_{\frak p}-\sum_{\al\in\Ps}\al)$ is obtained by subtracting
from $\delta_{\frak p}$ a linear combination of some simple roots from
$\Delta^+(\frak p_0)$ with non--negative integral
coefficients. Altogether, we see that $\la+\delta_{\frak
  p}=w(\nu+\delta_{\frak p})+\sum n_i\al_i$ for some non--negative
integers $n_i$ and simple roots $\al_i\in\Delta^+(\frak p_0)$.  Using
that $w(\nu+\delta_{\frak p})$ is $\frak p$--dominant, this easily
implies that
$$
\|\la+\delta_{\frak p}\|\geq \|w(\nu+\delta_{\frak p})\|=\|\nu+\delta_{\frak p}\|
$$ with equality if and only if all $n_i$ are zero. The latter
condition means that $w(\mu)=\la$ and that $w(\delta_{\frak
  p}-\sum_{\al\in\Ps}\al)=\delta_{\frak p}$. Hence we obtain
$\mu=w^{-1}(\la)$ and $w^{-1}(\delta_{\frak p})=\delta_{\frak
  p}-\sum_{\al\in\Ps}\al$. The last condition implies that
$\Ps=\Ph_{w^{-1}}$, which shows that $\nu=\nu_{w^{-1}}$, $w^{-1}\in
W^{\frak q}_{\frak p}$, and that there is only one possible
decomposition of $\nu$ as in \eqref{nudecomp}.

Using this uniqueness, multiplicity one of the weight $-w^{-1}(\la)$ in
$\Bbb V$ implies that $\nu_{w^{-1}}$ has multiplicity one as a weight
of $C_*(\frak q_+/\frak p_+,\Bbb V)$. Finally, since
$\la+\delta_{\frak p}$ lies in the interior of the dominant Weyl
chamber for $\frak p$, it follows that for $w\neq w'\in W^{\frak
  q}_{\frak p}$ we get $\nu_w\neq\nu_{w'}$.

In view of Corollary \ref{cor2.7}, we can complete the proof by showing
that each $-\nu_w$ for $w\in W^{\frak q}_{\frak p}$ is actually a
lowest weight of $C_*(\frak q_+/\frak p_+,\Bbb V)$. But this follows
as for the absolute version of Kostant's theorem, by showing that for
$\al\in\Delta^+(\frak q_0)$ one has $\|\nu_w+\al+\delta_{\frak
  p}\|>\|\la+\delta_{\frak p}\|$, so $-\nu_w-\al$ cannot be a weight
of $C_*(\frak q_+/\frak p_+,\Bbb V)$.
\end{proof}

\section{Relative and absolute Homology}\label{3}

We start by describing the relation between relative and absolute
Hasse dia\-grams. Since irreducible representations of $\frak p$ can
have singular infinitesimal character, relative homology groups
realize parts of an affine Weyl--orbits in either regular or
singular infinitesimal character. In the case of regular infinitesimal
character, we show that each affine Weyl orbit decomposes into a
disjoint union of sequences of relative homology groups, and show how
to obtain the individual sequences in terms of $\frak q$--invariant
data.

\subsection{Lie algebra homology and affine Weyl orbits}\label{3.1} 
The reason for the importance of Kostant's theorem in the theory of
parabolic geometries is its relation to infinitesimal character. Given
a parabolic subalgebra $\frak q$ in a semisimple Lie algebra $\frak g$
and corresponding groups $Q\subset G$, there is an associated
geometric structure. These so--called parabolic geometries can be
uniformly described in terms of Cartan connections, see the exposition
in \cite{book}. The class of parabolic geometries contains important
examples like conformal structures and CR--structures and has been
intensively studied during the last years. Via a construction of
associated bundles, any representation of $Q$ determines a natural
vector bundle on any manifold endowed with a parabolic geometry of type
$(G,Q)$. One of the difficult and important questions then is to
describe differential operators acting on sections of such bundles,
which are intrinsic to the geometry in question. 

This has a close connection to representation theory. The
\textit{homogeneous model} of parabolic geometries of type $(G,Q)$ is
the homogeneous space $G/Q$. For this example, the natural vector
bundles as described above are exactly the homogeneous vector
bundles, and differential operators intrinsic to the geometry are
exactly those which are intertwining operators for that natural
$G$--representations on spaces of smooth sections of homogeneous
vector bundles. Via a duality, intertwining operators which are
differential operators are related to homomorphisms of induced
modules, see Section 1.4.10 in \cite{book}. 

If one considers homogeneous bundles associated to irreducible
representations of $Q$, then the resulting induced modules are
generalized Verma modules. These are modules having an infinitesimal
character, and clearly a non--zero homomorphism between two such
modules can only exist if their infinitesimal characters agree. By a
classical theorem of Harish--Chandra, this is true if and only if the
highest weights of the inducing representations can be obtained from
each other by the affine action of an element of the Weyl group. (The
duality mentioned above is the reason why in the context related to
differential operators it is more natural to work with negatives of
lowest weights rather than highest weights.) This shows that invariant
differential operators for parabolic geometries are rather rare, which
is one of the reasons why they are interesting.

While the affine action of the Weyl group is easy to understand in
terms of weights, it is a priori not at all clear (in particular in a
geometric picture) how the representations in an affine Weyl orbit
(which form the candidates for inducing domains and targets for
invariant differential operators) are related. For the further
discussion, we have to distinguish between affine Weyl orbits of
\textit{regular} and of \textit{singular} infinitesimal
character. Regular orbits are those involving points in the interiors
of Weyl chambers, while singular orbits are contained in walls.

In the case of regular infinitesimal character (and of integral
weights) Kostant's theorem provides a satisfactory solution to this
problem. Any regular orbit contains a weight which lies in the
interior of the dominant Weyl chamber, so it is of the form
$\la+\delta$ for a $\frak g$--dominant weight $\la$. If, in addition,
$\la$ is integral, then there is a finite--dimensional irreducible
representation $\Bbb V$ of $\frak g$ corresponding to $\la$. Kostant's
theorem then implies that the completely reducible representation
$H_*(\frak q_+,\Bbb V)$ is the direct sum of one copy of each of the
irreducible $\frak q$--representations with highest weights contained
in the affine Weyl--orbit of $\la$. The $\frak q$--invariant
description in terms of the standard complex computing Lie algebra
homology can then be directly transferred to geometry and this is the
basis for the construction of BGG--sequences in \cite{CSS-BGG}.

The relative Lie algebra homology groups studied in Section \ref{2}
can serve a similar purpose in some cases of singular infinitesimal
character, see Example \ref{ex3.2}. On the other hand, in regular
infinitesimal character, one can nicely relate absolute homology
groups to relative ones. In both situations this has important
consequences for invariant differential operators, which are studied
in \cite{part2}.

\subsection{The relation between absolute and relative Hasse
  diagrams}\label{3.2} 
Let us return to our standard setting of two nested parabolic
subalgebras $\frak q\subset\frak p$ in a complex semisimple Lie
algebra $\frak g$. Then we have the two subgroups $W_{\frak q}\subset
W_{\frak p}$, the two (absolute) Hasse diagrams $W^{\frak q}\supset
W^{\frak p}$, and the relative Hasse diagram $W^{\frak q}_{\frak p}$
from Definition \ref{def2.8}. All these subsets admit a nice
description in terms of the set $\Ph_w\subset\Delta^+$ associated to a
Weyl group element $w\in W$, compare with Section \ref{2.8}. The basis
for the further discussion is the following simple result.

\begin{prop}\label{prop3.2}
  Let $\frak q\subset\frak p$ be two nested parabolic subalgebras in a
  complex semisimple Lie algebra $\frak g$. Then multiplication in the
  Weyl group $W$ of $\frak g$ induces a bijection
$$
W^{\frak q}_{\frak p}\x W^{\frak p}\to W^{\frak q}.
$$
Moreover, for $w_1\in W^{\frak q}_{\frak p}$ and $w_2\in W^{\frak p}$,
we get $\ell(w_1w_2)=\ell(w_1)+\ell(w_2)$.
\end{prop}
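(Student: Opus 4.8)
The plan is to characterize all three sets $W^{\frak q}$, $W^{\frak p}$ and $W^{\frak q}_{\frak p}$ via the associated subsets $\Ph_w\subset\Delta^+$, using the decomposition $\Delta^+(\frak p_+)\sqcup\Delta^+(\frak p_0\cap\frak q_+)=\Delta^+(\frak q_+)$ noted in Section~\ref{2.8}. Recall the standard fact (section 3.2.15 of \cite{book}) that for $w,w'\in W$ one has $\ell(ww')=\ell(w)+\ell(w')$ if and only if $\Ph_w\cap w(\Ph_{w'})=\emptyset$, in which case $\Ph_{ww'}=\Ph_w\sqcup w(\Ph_{w'})$. So the whole statement will follow once I show: for $w_1\in W^{\frak q}_{\frak p}$ and $w_2\in W^{\frak p}$ the product $w:=w_1w_2$ lies in $W^{\frak q}$ with $\Ph_w=\Ph_{w_1}\sqcup w_1(\Ph_{w_2})$ a disjoint union inside $\Delta^+$, and conversely every $w\in W^{\frak q}$ arises from a unique such pair.

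First I would prove the ``forward'' direction. Given $w_1\in W^{\frak q}_{\frak p}$, so $\Ph_{w_1}\subset\Delta^+(\frak p_0\cap\frak q_+)$, and $w_2\in W^{\frak p}$, so $\Ph_{w_2}\subset\Delta^+(\frak p_+)$, I must check that $w_1(\Ph_{w_2})\subset\Delta^+(\frak q_+)$ and is disjoint from $\Ph_{w_1}$. The key point is that $w_1\in W_{\frak p}$ (indeed $W^{\frak q}_{\frak p}=W^{\frak q}\cap W_{\frak p}$ by the discussion after Definition~\ref{def2.8}), hence $w_1$ permutes $\Delta^+(\frak p_0)$ and fixes $\Delta^+(\frak p_+)$ setwise; therefore each $\al\in\Ph_{w_2}\subset\Delta^+(\frak p_+)$ has $w_1(\al)\in\Delta^+(\frak p_+)\subset\Delta^+(\frak q_+)$. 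In particular $w_1(\Ph_{w_2})\subset\Delta^+(\frak p_+)$ is disjoint from $\Ph_{w_1}\subset\Delta^+(\frak p_0\cap\frak q_+)$ since these two root subsets are disjoint. This simultaneously gives $\ell(w_1w_2)=\ell(w_1)+\ell(w_2)$ and $\Ph_{w_1w_2}=\Ph_{w_1}\sqcup w_1(\Ph_{w_2})\subset\Delta^+(\frak q_+)$, i.e.\ $w_1w_2\in W^{\frak q}$.

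Next I would establish injectivity and surjectivity of $(w_1,w_2)\mapsto w_1w_2$. For surjectivity, take $w\in W^{\frak q}$, so $\Ph_w\subset\Delta^+(\frak q_+)$. Applying Proposition~5.13 of \cite{Kostant} (Proposition~3.2.15 of \cite{book}) to the parabolic $\frak p\subset\frak g$, write $w=uw_2$ uniquely with $u\in W_{\frak p}$ and $w_2\in W^{\frak p}$, and $\ell(w)=\ell(u)+\ell(w_2)$, $\Ph_w=\Ph_u\sqcup u(\Ph_{w_2})$. I claim $u\in W^{\frak q}_{\frak p}$: since $u\in W_{\frak p}$ we have $\Ph_u\subset\Delta^+(\frak p_0)$, and $\Ph_u\subset\Ph_w\subset\Delta^+(\frak q_+)$, so $\Ph_u\subset\Delta^+(\frak p_0)\cap\Delta^+(\frak q_+)=\Delta^+(\frak p_0\cap\frak q_+)$, which is exactly the condition defining $W^{\frak q}_{\frak p}$. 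Thus $w=uw_2$ with $u\in W^{\frak q}_{\frak p}$, $w_2\in W^{\frak p}$. Injectivity is immediate: if $w_1w_2=w_1'w_2'$ with $w_i,w_i'$ in the respective sets, then both are the expression of this element under the (unique) $W_{\frak p}$--$W^{\frak p}$ factorization of Proposition~3.2.15, since $w_1,w_1'\in W_{\frak p}$; hence $w_1=w_1'$ and $w_2=w_2'$. The length additivity is recorded along the way.

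I do not expect a serious obstacle here; the only point requiring care is the bookkeeping that $W^{\frak q}_{\frak p}$ is exactly the set of elements of $W_{\frak p}$ appearing as the $W_{\frak p}$--part in the factorization of a $W^{\frak q}$--element, which is handled by the root-subset description $\Delta^+(\frak p_0)\cap\Delta^+(\frak q_+)=\Delta^+(\frak p_0\cap\frak q_+)$ from Section~\ref{2.8} together with the identification $W^{\frak q}_{\frak p}=W^{\frak q}\cap W_{\frak p}$.
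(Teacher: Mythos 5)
Your argument is correct and follows essentially the same route as the paper's: describe $W^{\frak q}$, $W^{\frak p}$ and $W^{\frak q}_{\frak p}$ through the root sets $\Ph_w$ and reduce everything to the unique factorization $W_{\frak p}\x W^{\frak p}\to W$ of Proposition 3.2.15 of \cite{book}. One small correction to the ``standard fact'' you invoke: the criterion for $\ell(ww')=\ell(w)+\ell(w')$ is $w(\Ph_{w'})\subset\Delta^+$ (equivalently $\Ph_{w^{-1}}\cap\Ph_{w'}=\emptyset$), not $\Ph_w\cap w(\Ph_{w'})=\emptyset$, which is necessary but not sufficient (take $w=w'$ a simple reflection); since you actually verify the stronger statement $w_1(\Ph_{w_2})\subset\Delta^+(\frak p_+)\subset\Delta^+$, your forward step is unaffected.
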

\begin{proof}
  Take $w_1\in W^{\frak q}_{\frak p}$ and $w_2\in W^{\frak p}$, so we
  know that $\Ph_{w_1}\subset\Delta^+(\frak p_0\cap\frak q_+)$ and
  $\Ph_{w_2}\subset\Delta^+(\frak q_+)$, and put $w=w_1w_2$. For a
  positive root $\al\in\Delta^+(\frak q_0)$, we thus get
  $w_1^{-1}(\al)\in\Delta^+$. Since $\frak q_0\subset\frak p_0$ and
  $w_1\in W_{\frak p}$, we even get $w_1^{-1}(\al)\in\Delta^+(\frak
  p_0)$. But this implies $w_2^{-1}(w_1^{-1}(\al))\in\Delta^+$. Hence
  we see that $\Ph_w\subset\Delta^+(\frak q_+)$, so $w\in W^{\frak
    q}$. 

  Conversely, Proposition 5.13 of \cite{Kostant} (Proposition 3.2.15
  in \cite{book}) for the parabolic subalgebra $\frak p\subset\frak g$
  says that each element $w\in W$ can be uniquely written as
  $w=w_1w_2$ with $w_1\in W_{\frak p}$ and $w_2\in W^{\frak p}$ and
  that $\ell(w)=\ell(w_1)+\ell(w_2)$. Hence we can complete the proof
  by showing that $w\in W^{\frak q}$ implies $w_1\in W^{\frak
    q}_{\frak p}$. But this follows immediately from the proof of this
  result in \cite{book}, since there $w_1$ is obtained as the unique
  element of $W_{\frak p}$ such that
  $\Ph_{w_1}=\Ph_w\cap\Delta^+(\frak p_0)\subset\Delta^+(\frak
  p_0\cap\frak q_+)$.
\end{proof}

This result significantly simplifies the determination of the Hasse
diagrams of non--maximal parabolics as well as the affine orbits of
weights under this Hasse diagram. Let us describe this in a simple
example, in which the Hasse diagram is available in Section 3.2.16 of
\cite{book}. 

\begin{example}\label{ex3.2}
Consider $\frak g=\frak{sl}(4,\Bbb C)$, let $\frak p$ be the maximal
parabolic subalgebra corresponding to the first simple root and let
$\frak q$ be the parabolic subalgebra corresponding to the first two
simple roots. In the language of crossed Dynkin diagrams, these are 
$$
\frak p=\xdd{}{}{} \qquad \frak q=\xxd{}{}{} . 
$$  
(1) Denoting the reflections corresponding to the simple roots by
$\si_i$, $i=1,2,3$ with roots numbered from left to right, it is clear
by definition that $W_{\frak p}$ is generated by $\si_2$ and
$\si_3$. Moreover, $\delta^{\frak q}_{\frak p}$ is just the second
fundamental weight in this case. To represent a weight, we write it as
a linear combination of fundamental weights and then write the
coefficient over the vertex for the corresponding simple root in the
crossed Dynkin diagram. In this language, it is easy to compute the
action of simple reflections on weights, see Section 3.2.16 of
\cite{book}. This shows that the $W_{\frak p}$--orbit of
$\delta^{\frak q}_{\frak p}$ is
$$
\xdd{0}{1}{0} \to \xdd{1}{-1}{1} \to \xdd{1}{0}{-1}
$$
and $W^{\frak q}_{\frak p}=\{e,\si_2,\si_2\si_3\}$ with the elements
of length $0$, $1$, and $2$. This immediately allows us to compute the
affine $W^{\frak q}_{\frak p}$--orbit of a general weight as 
\begin{equation}
  \label{eq:wqp-orb}
\xxd{a}{b}{c} \to \qquad \wxxd{a+b+1}{-b-2}{b+c+1} \qquad \to \qquad
\Wxxd{a+b+c+2}{-b-c-3}{b}    
\end{equation}

Now the basic case of interest for computing homology is that the
initial weight is $\frak p$--dominant and integral, i.e.~that $a$, $b$,
and $c$ are integers with $b,c\geq 0$. Then the three weights in the
above pattern, which visibly are $\frak q$--dominant and integral, are
the negatives of the lowest weights of the irreducible representations
$H_i(\frak q_+/\frak p_+,\Bbb V)$ for $i=0,1,2$. Here $\Bbb V$ is the
irreducible representation of $\frak p$ with the negative of the
lowest weight equal to $\xdd{a}{b}{c}$. 

(2) Now we can use Proposition \ref{prop3.2} to determine the
Hasse diagram of $\frak q$ as well as affine Weyl orbits. Similarly
as in (1), one can determine the Weyl orbit of $\ddd{1}{0}{0}$ to
obtain that $W^{\frak p}=\{e,\si_1,\si_1\si_2,\si_1\si_2\si_3\}$ with
elements of length $0$, $1$, $2$, and $3$. Together with the
description of $W^{\frak q}_{\frak p}$ from part (1), this immediately
gives the 12--element set $W^{\frak q}$. Likewise, we can easily
determine the affine orbit of a general weight under $W^{\frak p}$ as 
\begin{equation}
  \label{eq:wp-orb}
\xdd{a}{b}{c} \to\quad \awxdd{-a-2}{a+b+1}{c} \to \quad
\bwxdd{-a-b-3}{a}{b+c+1}\qquad \to \qquad \wxdd{-a-b-c-4}{a}{b}
\end{equation}
The full $W^{\frak q}$--orbit is then obtained by shifting each of
these weights according to \eqref{eq:wqp-orb}. 

\medskip

In the case of integral weights of regular infinitesimal character,
the initial weight in \eqref{eq:wp-orb} is $\frak g$--dominant and
integral, and we see that the $W^{\frak q}$--orbit of the weight is
the disjoint union of four $W^{\frak q}_{\frak p}$--orbits as in
\eqref{eq:wqp-orb} with the initial weights coming from 
\eqref{eq:wp-orb}.

The situation in singular infinitesimal character is more subtle. The
largest singular orbits (for which each element lies in one wall but
not in the intersection of two walls) are obtained by setting one
coefficient in the initial weight equal to $-1$. In each of the three
possible cases, one verifies that the $W^{\frak p}$--orbit of the
corresponding weight degenerates to a three--element set, with one of
the elements $\frak p$--dominant in each case. Using
\eqref{eq:wqp-orb} to determine $W^{\frak q}_{\frak p}$--orbits, we
obtain three basic patterns in singular infinitesimal character,
namely
\begin{gather*}
\xxd{-1}{a}{b} \ \to \ \waxxd{a}{-a-2}{a+b+1} \quad \to \quad
\Wxxd{a+b+1}{-a-b-3}{a} \\
\wcxxd{-a-2}{a}{b} \ \to \ \waxxd{-1}{-a-2}{a+b+1} \quad \to \
\wxxd{b}{-a-b-3}{a} \\
\wbxxd{-a-b-3}{a}{b} \ \to \quad \wxxd{-b-2}{-a-2}{a+b+1} \quad \to \ 
\wxxd{-1}{-a-b-3}{a} 
\end{gather*}
with $a,b\geq 0$. For all these cases, the construction in
\cite{part2} produces invariant differential operators in singular
infinitesimal character.

To conclude this example, we remark that there is another maximal
parabolic $\tilde{\frak p}$ containing $\frak q$, which corresponds to
the crossed Dynkin diagram $\dxd$. For this case, the relative Hasse
diagram $W^{\frak q}_{\tilde{\frak p}}$ is the two--element set
$\{e,\si_1\}$ while $W^{\tilde{\frak p}}$ has six elements, compare
with Example 3.2.17 in \cite{book}. Thus one obtains a decomposition
of $W^{\frak q}$ into two copies of $W^{\tilde{\frak p}}$. Both this
decomposition and the one into three copies of $W^{\frak p}$ can be
spotted in the picture for $W^{\frak q}$ on top of p.~330 of
\cite{book}. 
\end{example}

\subsection{Decomposing absolute homology}\label{3.3}
From now on, we will restrict to the case of regular infinitesimal
character and integral weights. This means that the affine Weyl orbit
in question contains a $\frak g$--dominant integral weight $\la$, so
we can actually start with a finite dimensional complex irreducible
representation $\Bbb V$ of $\frak g$ with lowest weight $-\la$. By
Kostant's theorem (applied to $\frak p\subset\frak g$), the $\frak
p$--dominant weights in the affine Weyl orbit of $\la$ are exactly the
highest weights of the $\frak p$--irreducible summands in $H_*(\frak
p_+,\Bbb V)$. If $\Bbb W$ is one of these summands, then we can
consider $H_*(\frak q_+/\frak p_+,\Bbb W)$. If the lowest weight of
the summand is $-w\cdot\la$ with $w\in W^{\frak p}$, then we can apply
Theorem \ref{thm2.9} to see that this relative homology is the direct
sum of one copy of each of the $\frak q$--irreducible representations
with negative of the lowest weight contained in the affine $W^{\frak
  q}_{\frak p}$--orbit of $w\cdot\la$. In view of Proposition
\ref{prop3.2}, this implies the following theorem in the complex case,
the real case then follows by complexification.

\begin{thm}\label{thm3.3}
  Let $\frak q\subset\frak p$ be two nested parabolic subalgebras in a
  real or complex semisimple Lie algebra $\frak g$ and let $\Bbb V$ be
  a completely reducible representation of $\frak g$. Then, as a module
  over $\frak q_0$ and for each $k=0,\dots,\dim(\frak q_+)$, one has
$$
H_k(\frak q_+,\Bbb V)\cong \oplus_{i+j=k}H_i(\frak q_+/\frak p_+,H_j(\frak
p_+,\Bbb V)).
$$
\end{thm}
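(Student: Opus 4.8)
\emph{Reductions.} First I would reduce to the complex case. Complexification sends the nested real parabolics $\frak q\subset\frak p\subset\frak g$ to nested complex parabolics $\frak q_{\Bbb C}\subset\frak p_{\Bbb C}\subset\frak g_{\Bbb C}$, sends $\Bbb V$ to a completely reducible complex representation, and commutes with all three homology functors $H_*(\frak q_+,-)$, $H_*(\frak p_+,-)$, $H_*(\frak q_+/\frak p_+,-)$, as well as with the formation of $\frak q_0$ from $\frak q$. Since two completely reducible real $\frak q_0$--modules are isomorphic precisely when their complexifications are isomorphic as complex $(\frak q_0)_{\Bbb C}$--modules, it suffices to prove the statement over $\Bbb C$. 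Next, all three homology functors are additive in the coefficient module, so both sides of the claimed isomorphism are additive in $\Bbb V$; hence we may assume $\Bbb V$ is a finite dimensional complex irreducible representation of $\frak g$. Write its lowest weight as $-\la$, so that $\la$ is $\frak g$--dominant and integral; then $\la+\delta$ lies in the interior of the dominant Weyl chamber, so the weights $u(\la+\delta)$, and hence the affine translates $u\cdot\la=u(\la+\delta)-\delta$, are pairwise distinct as $u$ ranges over $W$.

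\emph{Applying Kostant's theorem, absolute and relative.} For a $\frak q$--dominant integral weight $\mu$ denote by $F_\mu$ the irreducible $\frak q_0$--module with lowest weight $-\mu$. Applying Theorem \ref{thm2.9} to the pair $\frak q\subset\frak g$ (the case ``$\frak p=\frak g$'', in which $\frak q_+/\frak p_+=\frak q_+$, $\delta_{\frak p}=\delta$, and $W^{\frak q}_{\frak p}=W^{\frak q}$) gives, as a $\frak q_0$--module,
$$
H_k(\frak q_+,\Bbb V)\cong\bigoplus_{u\in W^{\frak q},\ \ell(u)=k}F_{u\cdot\la},
$$
each summand occurring once. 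Likewise, applying Theorem \ref{thm2.9} to the pair $\frak p\subset\frak g$ shows that $H_j(\frak p_+,\Bbb V)$ is the direct sum, over $w\in W^{\frak p}$ with $\ell(w)=j$, of the irreducible $\frak p_0$--module $\Bbb W_w$ with lowest weight $-w\cdot\la$ (which is $\frak p$--dominant integral), extended trivially on $\frak p_+$. Each $\Bbb W_w$ is thus a legitimate coefficient module for $H_*(\frak q_+/\frak p_+,-)$, and Theorem \ref{thm2.9} applied to $\frak q\subset\frak p$ with coefficients $\Bbb W_w$ yields
$$
H_i(\frak q_+/\frak p_+,\Bbb W_w)\cong\bigoplus_{v\in W^{\frak q}_{\frak p},\ \ell(v)=i}F_{v\cdot(w\cdot\la)},
$$
again with multiplicity one, where the right-hand side is $\frak q_0$--dominant and integral by Lemma \ref{lem2.8}.

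\emph{Matching the two descriptions.} Since $w\mapsto w\cdot\la$ is a genuine (left) action of $W$ on weights, $v\cdot(w\cdot\la)=(vw)\cdot\la$. Therefore, summing over $i+j=k$,
$$
\bigoplus_{i+j=k}H_i(\frak q_+/\frak p_+,H_j(\frak p_+,\Bbb V))\cong\bigoplus_{\substack{v\in W^{\frak q}_{\frak p},\ w\in W^{\frak p}\\ \ell(v)+\ell(w)=k}}F_{(vw)\cdot\la}
$$
as a $\frak q_0$--module. By Proposition \ref{prop3.2}, multiplication $(v,w)\mapsto vw$ is a bijection $W^{\frak q}_{\frak p}\x W^{\frak p}\to W^{\frak q}$ with $\ell(vw)=\ell(v)+\ell(w)$, so this reindexes exactly to $\bigoplus_{u\in W^{\frak q},\ \ell(u)=k}F_{u\cdot\la}$. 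Comparing with the first display of the previous paragraph, both sides of the asserted isomorphism are completely reducible $\frak q_0$--modules with the same isotypical decomposition: the multiset $\{F_{u\cdot\la}:u\in W^{\frak q},\ \ell(u)=k\}$, whose members are pairwise non-isomorphic because the weights $u\cdot\la$ are distinct. Hence they are isomorphic as $\frak q_0$--modules, which completes the proof after undoing the reductions.

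\emph{Where the work is.} Once Theorem \ref{thm2.9} and Proposition \ref{prop3.2} are in hand, the argument is essentially bookkeeping: the only genuine points to verify are (i) additivity of the three homology functors, which legitimizes the reduction to irreducible $\Bbb V$; (ii) that $\Bbb W_w$ meets the hypotheses of Theorem \ref{thm2.9} (an irreducible $\frak p_0$--module with $\frak p$--dominant lowest weight) and that $v\cdot(w\cdot\la)=(vw)\cdot\la$; and (iii) that the weights $u\cdot\la$, $u\in W^{\frak q}$, are pairwise distinct, so that the two isotypical decompositions literally coincide. Point (iii), hence the use of regularity of $\la+\delta$, is what makes the comparison of lowest weights conclusive, and keeping the three Weyl-subset indexings $W^{\frak q}$, $W^{\frak p}$, $W^{\frak q}_{\frak p}$ aligned — which is precisely the content of Proposition \ref{prop3.2} — is the only mildly delicate step.
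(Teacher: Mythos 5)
Your proposal is correct and follows essentially the same route as the paper: reduce to a complex irreducible $\Bbb V$, apply Kostant's theorem to $\frak p\subset\frak g$ and $\frak q\subset\frak g$, apply Theorem \ref{thm2.9} to each irreducible summand of $H_*(\frak p_+,\Bbb V)$, and match lowest weights via the bijection $W^{\frak q}_{\frak p}\x W^{\frak p}\to W^{\frak q}$ of Proposition \ref{prop3.2} together with regularity of $\la+\delta$. Your write-up merely makes explicit a few points the paper leaves implicit (additivity in $\Bbb V$, the identity $v\cdot(w\cdot\la)=(vw)\cdot\la$, and pairwise distinctness of the weights $u\cdot\la$).
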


As it stands, this is an abstract isomorphism deduced from coincidence
of highest weights of irreducible components, and initially it is
unclear how to obtain explicit maps realizing such an isomorphism. The
rest of this article is devoted to giving a construction of an explicit
$\frak q$--invariant map inducing this isomorphism. This will be a
crucial ingredient for the application to relative BGG sequences.

Observe that the statement of Theorem \ref{thm3.3} actually looks like
the result of a collapsed homology--version of a Hochschild--Serre
spectral sequence (compare with Theorem 12.6 in
\cite{users-guide}). Indeed, our description is based on a $\frak
q$--invariant filtration of the spaces $C_*(\frak q_+,\Bbb V)$ in the
standard complex computing $H_*(\frak q_+,\Bbb V)$. However, this
filtration is not compatible with the Lie algebra homology
differential, which we will denote by $\partial^*_{\frak q}$ in what
follows, so we do not obtain a filtered complex. One could also
involve a filtration on $\Bbb V$ to make the standard complex into a
filtered complex and probably use this to obtain an alternative proof
of Theorem \ref{3.3}. However, the resulting filtration looks much
less useful for the applications we have in mind.

There is a natural way to view the spaces $C_*(\frak q_+,\Bbb V)$ as
$k$--linear alternating maps. Since the Killing form of $\frak g$
induces a $\frak q$--equivariant duality between $\frak q_+$ and
$\frak g/\frak q$, we see that we can view $C_*(\frak q_+,\Bbb V)$ as
$L(\La^*(\frak g/\frak q),\Bbb V)$.  Further, since $\frak
p\subset\frak g$ is a $\frak q$--invariant subspace, so is $\frak
p/\frak q\subset\frak g/\frak q$. Having this at hand, we can define
our filtration.

\begin{definition}\label{def3.3}
  Given nested parabolic subalgebras $\frak q\subset\frak
  p\subset\frak g$, a completely reducible representation $\Bbb V$ of
  $\frak g$, and integers $0<\ell\leq k$, we define subspaces $\Cal
  F^\ell_k\subset C_k(\frak q_+,\Bbb V)$ as follows.

  Viewing $\ph\in C_k(\frak q_+,\Bbb V)$ as a $k$--linear, alternating
  map $(\frak g/\frak q)^k\to\Bbb V$, we say that $\ph\in\Cal
  F^\ell_k$ if and only if it vanishes if at least $k-\ell+1$ of its
  entries are from $\frak p/\frak q\subset\frak g/\frak q$. We further
  put $\Cal F^\ell:=\oplus_{k\geq\ell}\Cal F^\ell_k$. 
\end{definition}

\begin{lemma}\label{lem3.3}
  The subspaces $\Cal F^\ell\subset C_*(\frak
  q_+,\Bbb V)$ define a $\frak q$--invariant decreasing filtration,
  i.e.~$\Cal F^0:=C_*(\frak q_+,\Bbb V)\supset\Cal
  F^1\supset\dots\supset \Cal F^r\supset\Cal F^{r+1}=\{0\}$, where
  $r=\dim(\frak g/\frak p)$. 

Moreover, there is a $\frak q$--equivariant surjection 
$$
\Cal F^\ell \to \oplus_{k\geq\ell}L(\La^{k-\ell}(\frak p/\frak
q),\La^\ell\frak p_+\otimes\Bbb V)  
$$
with kernel $\Cal F^{\ell+1}$. 
\end{lemma}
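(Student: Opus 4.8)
The strategy is to check the filtration properties directly from the definition and then to construct the required surjection as an explicit $\frak q$-equivariant map realizing the passage to the associated graded; the second half is really the computation of $\gr(\Cal F^\bullet)$ for the short exact sequence $0\to\frak p/\frak q\to\frak g/\frak q\to\frak g/\frak p\to 0$ of $\frak q$-modules.

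\emph{The filtration.} The inclusion $\Cal F^{\ell+1}\subset\Cal F^\ell$ holds because the defining condition for $\Cal F^{\ell+1}_k$ (vanishing as soon as $k-\ell$ of the arguments lie in $\frak p/\frak q$) is stronger than that for $\Cal F^\ell_k$, and $\Cal F^0_k=C_k(\frak q_+,\Bbb V)$ since the defining condition is then vacuous. For $\frak q$-invariance, recall from Section \ref{2.1} that $\frak p$ is $\frak q$-invariant, hence so is $\frak p/\frak q\subset\frak g/\frak q$; writing out the natural $\frak q$-action on a $k$-linear alternating map $\ph$, each value $(X\cdot\ph)(v_1,\dots,v_k)$ is assembled from $X\cdot\ph(v_1,\dots,v_k)$ and the terms $\ph(v_1,\dots,X\cdot v_i,\dots,v_k)$, and replacing one argument by its image under $X$ does not decrease the number of arguments lying in the $X$-invariant subspace $\frak p/\frak q$, so all these terms vanish whenever $\ph\in\Cal F^\ell_k$ and at least $k-\ell+1$ of the $v_i$ lie in $\frak p/\frak q$. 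Finally, for $\Cal F^{r+1}=\{0\}$ with $r=\dim(\frak g/\frak p)$: fix a linear complement $U$ to $\frak p/\frak q$ in $\frak g/\frak q$, so $\dim U=r$; given $\ph\in\Cal F^{r+1}_k$ with $k\geq r+1$ and arbitrary $z_1,\dots,z_k\in\frak g/\frak q$, split each $z_i$ into its $\frak p/\frak q$- and $U$-parts and expand the wedge---every nonzero resulting term uses at most $r$ factors from $U$, hence at least $k-r$ factors from $\frak p/\frak q$, so $\ph$ annihilates it and $\ph=0$.

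\emph{The map.} By Section \ref{2.1} the Killing form $B$ identifies $\frak q_+\cong(\frak g/\frak q)^*$ and $\frak p_+\cong(\frak g/\frak p)^*$ as $\frak q$-modules, compatibly with $\frak p_+\subset\frak q_+$ and the projection $\pi\colon\frak g/\frak q\to\frak g/\frak p$. Hence $\La^\ell\frak p_+\otimes\Bbb V\cong L(\La^\ell(\frak g/\frak p),\Bbb V)$ and the target of the asserted map becomes $\oplus_{k\geq\ell}L(\La^{k-\ell}(\frak p/\frak q)\otimes\La^\ell(\frak g/\frak p),\Bbb V)$. For $\ph\in\Cal F^\ell_k$ I define $\Phi(\ph)$ on the $k$-th summand by
$$
\Phi(\ph)(v_1\wedge\dots\wedge v_{k-\ell}\otimes w_1\wedge\dots\wedge w_\ell):=\ph(v_1\wedge\dots\wedge v_{k-\ell}\wedge\tilde w_1\wedge\dots\wedge\tilde w_\ell),
$$
where $v_i\in\frak p/\frak q$, $w_j\in\frak g/\frak p$, and $\tilde w_j$ is any element of $\pi^{-1}(w_j)$. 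This is independent of the choice of preimages: altering some $\tilde w_j$ by an element of $\frak p/\frak q$ changes the right-hand side by a value of $\ph$ on a wedge with at least $k-\ell+1$ factors in $\frak p/\frak q$, which vanishes because $\ph\in\Cal F^\ell_k$---this is the one step genuinely using the defining property of $\Cal F^\ell$. Since $\Phi(\ph)$ is visibly alternating in the $v_i$ and in the $w_j$, we obtain $\Phi\colon\Cal F^\ell\to\oplus_{k\geq\ell}L(\La^{k-\ell}(\frak p/\frak q)\otimes\La^\ell(\frak g/\frak p),\Bbb V)$.

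\emph{The three remaining checks, all short.} (i) $\Phi$ is $\frak q$-equivariant: since $\pi$ is $\frak q$-equivariant one may take $\widetilde{X\cdot w_j}=X\cdot\tilde w_j$, and then expanding the natural $\frak q$-actions on source and target via the standard formula for the action on multilinear maps, the identity $\Phi(X\cdot\ph)=X\cdot\Phi(\ph)$ follows by matching the resulting sums term by term. (ii) $\ker(\Phi|_{\Cal F^\ell})=\Cal F^{\ell+1}$: as $\pi$ is onto, $\Phi(\ph)=0$ says precisely that $\ph$ vanishes on every $v_1\wedge\dots\wedge v_{k-\ell}\wedge z_1\wedge\dots\wedge z_\ell$ with $v_i\in\frak p/\frak q$ and $z_j\in\frak g/\frak q$ arbitrary, i.e.\ that $\ph$ vanishes whenever at least $k-\ell$ of its arguments lie in $\frak p/\frak q$---which is exactly the condition $\ph\in\Cal F^{\ell+1}_k$. (iii) $\Phi$ is onto: with $U$ as above, $\La^k(\frak g/\frak q)=\oplus_{p+q=k}\La^p(\frak p/\frak q)\otimes\La^qU$, so given $\psi$ in the $k$-th summand of the target one lets $\ph\in L(\La^k(\frak g/\frak q),\Bbb V)$ equal $\psi$ on $\La^{k-\ell}(\frak p/\frak q)\otimes\La^\ell U$ (via $U\cong\frak g/\frak p$) and vanish on the remaining summands; then $\ph\in\Cal F^\ell_k$, and using the $U$-preimages in the formula for $\Phi$ gives $\Phi(\ph)=\psi$. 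The case $\ell=0$ is identical with the conventions $\La^0(\frak p/\frak q)=\La^0\frak p_+=\Bbb C$. There is no serious obstacle here: the lemma is bookkeeping with the filtration attached to the above short exact sequence of $\frak q$-modules, and the only step needing real care is the term-by-term verification of $\frak q$-equivariance of $\Phi$.
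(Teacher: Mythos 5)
Your proof is correct and follows essentially the same route as the paper: the map is defined by inserting $k-\ell$ arguments from $\frak p/\frak q$ and observing that the result descends to $L(\La^\ell(\frak g/\frak p),\Bbb V)\cong\La^\ell\frak p_+\otimes\Bbb V$, the kernel is identified with $\Cal F^{\ell+1}$ directly from the definition, and surjectivity is obtained by extending from a complement (the paper extends arbitrarily and alternates, which amounts to the same thing). The only cosmetic difference is that you prove $\Cal F^{r+1}=\{0\}$ by a direct counting argument with the complement $U$, whereas the paper deduces it from the vanishing of the target of the graded map for $\ell>r$; both are fine.
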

\begin{proof}
  The fact that each $\Cal F^\ell_k$ is $\frak q$--invariant follows
  immediately from the fact that $\frak p/\frak q\subset\frak g/\frak
  q$ is $\frak q$--invariant. If $\ell+1\leq k$ and $\ph\in\Cal
  F^{\ell+1}_k$, then $\ph$ vanishes upon insertion of $k-\ell$
  entries from $\frak p/\frak q$. But then evidently we have
  $\ph\in\Cal F^\ell_k$, so $\Cal F^\ell\supset\Cal F^{\ell+1}$.

Next, take $\ph\in\Cal F^\ell_k$ and view it as a $k$--linear
alternating map $(\frak g/\frak q)^k\to\Bbb V$. Inserting $k-\ell$
elements from $\frak p/\frak q$ into $\ph$, one obtains an
$\ell$--linear alternating map $(\frak g/\frak q)^\ell\to\Bbb V$. By
assumption the resulting map vanishes upon insertion of a single
element from $\frak p/\frak q$, so it descends to an element of
$L(\La^\ell(\frak g/\frak p),\Bbb V)\cong\La^\ell\frak p_+\otimes\Bbb
V$. Hence we have defined a map
\begin{equation}\label{eq:assoc}
\Cal F^\ell_k\to L(\La^{k-\ell}(\frak p/\frak
q),\La^\ell\frak p_+\otimes\Bbb V),
\end{equation}
which is $\frak q$--equivariant by construction, and whose kernel by
definition coincides with $\Cal F^{\ell+1}_k$. 

If $\ell>r=\dim(\frak g/\frak p)=\dim(\frak p_+)$, then the target
space in \eqref{eq:assoc} is trivial, so we see that $\Cal
F^{\ell}=\Cal F^{\ell+1}$ if $\ell>r$. But since $\Cal F^\ell$
evidently is zero for $\ell>\dim(\frak q_+)$, we see that $\Cal
F^{r+1}=\{0\}$.

On the other hand, given an element of the target space in
\eqref{eq:assoc}, we can first choose an arbitrary extension to a
multilinear alternating map defined on $(\frak g/\frak
q)^{k-\ell}$. The values of this map can be interpreted as
$\ell$--linear maps $(\frak g/\frak p)^\ell\to\Bbb V$. Since $\frak
g/\frak p$ is a quotient of $\frak g/\frak q$, they can be viewed as
$\ell$--linear alternating maps defined on $(\frak g/\frak
q)^\ell$. Taking the arguments together and forming the complete
alternation, we obtain an element $C_k(\frak q_+,\Bbb V)$ and a moment
of thought shows that this is contained in $\Cal F^\ell_k$, so the map
in \eqref{eq:assoc} is surjective.
\end{proof}

Next, we have the Lie algebra homology differential $\partial^*_{\frak
  p}:\La^\ell\frak p_+\otimes\Bbb V\to\La^{\ell-1}\frak p_+\otimes\Bbb
V$, which is $\frak p$--equivariant and hence $\frak
q$--equivariant. In particular, the kernel $\ker(\partial^*_{\frak
  p})$ is a $\frak q$--invariant subspace in $\La^\ell\frak
p_+\otimes\Bbb V$. The maps with values in this subspace form a $\frak
q$--invariant subspace in $L(\La^{k-\ell}(\frak p/\frak q),\La^\ell\frak
p_+\otimes\Bbb V)$, and we denote by $\tcf^\ell_k\subset\Cal F^\ell_k$
the preimage of this subspace under the map from Lemma
\ref{lem3.3}. Hence $\tcf^\ell_k\subset\Cal F^\ell_k$ is a $\frak
q$--invariant subspace, which by Lemma \ref{lem3.3} contains $\Cal
F^{\ell+1}_k$, and we define
$\tcf^\ell=\oplus_{k\geq\ell}\tcf^\ell_k$.

Using the $\frak p$--equivariant quotient map $\ker(\partial^*_{\frak
  p})\to H_\ell(\frak p_+,\Bbb V)$ and the map from Lemma
\ref{lem3.3}, we obtain a $\frak q$--equivariant surjection 
\begin{equation}\label{pidef}
\pi:\tcf^\ell_k\to L(\La^{k-\ell}(\frak p/\frak q),H_\ell(\frak p_+,\Bbb
V))\cong \La^{k-\ell}(\frak q_+/\frak p_+)\otimes H_\ell(\frak
p_+,\Bbb V),
\end{equation}
which by construction vanishes on $\Cal
F^{\ell+1}_k\subset\tcf^\ell_k$.

\subsection{Relating absolute and relative homology}\label{3.4}
As a next step, we can clarify the compatibility of the map $\pi$ from
\eqref{pidef} with the Lie algebra homology differential
$\partial^*_{\frak q}:\La^k\frak q_+\otimes\Bbb V\to\La^{k-1}\frak
q_+\otimes\Bbb V$. 

\begin{prop}\label{prop3.4}
  (1) For the Lie algebra homology differential $\partial^*_{\frak
    q}$, it holds that $\ker(\partial^*_{\frak q})\cap\Cal
  F^\ell\subset\tcf^\ell$, $\partial^*_{\frak q}(\Cal
  F^\ell)\subset\Cal F^{\ell-1}$ and $\partial^*_{\frak
    q}(\tcf^\ell)\subset\tcf^\ell$.

  (2) Denoting by $\partial^*_\rho$ the relative Lie algebra homology
  differential acting on the space $\La^{k-\ell}(\frak q_+/\frak
  p_+)\otimes H_\ell(\frak p_+,\Bbb V)$, $\pi\o\partial^*_{\frak q}$
  coincides with $\partial^*_\rho\o\pi$ up to sign.
\end{prop}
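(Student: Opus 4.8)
The plan is to work entirely with the description of chain spaces as multilinear alternating maps, as set up in Section \ref{3.3}. For part (1), the three claims are of the same flavour and I would handle them by direct computation with the explicit formula \eqref{eq:codiff} for $\partial^*_{\frak q}$, keeping careful track of which entries lie in $\frak p/\frak q$. For the claim $\partial^*_{\frak q}(\Cal F^\ell)\subset\Cal F^{\ell-1}$: if $\ph\in\Cal F^\ell_k$ vanishes whenever $k-\ell+1$ entries come from $\frak p/\frak q$, I would insert $(k-1)-(\ell-1)+1 = k-\ell+1$ entries from $\frak p/\frak q$ into $\partial^*_{\frak q}\ph$ and check that every term in \eqref{eq:codiff} vanishes; the point is that in the first sum one either feeds $\ph$ directly $k-\ell+1$ entries from $\frak p/\frak q$ (plus the acted-upon slot), or the bracket term $[\cdot,\cdot]$ with both arguments in $\frak p/\frak q$ lands back in $\frak p/\frak q$ (since $\frak p$ is a subalgebra, $[\frak p,\frak p]\subset\frak p$), so again $\ph$ sees enough $\frak p/\frak q$ entries. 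For $\ker(\partial^*_{\frak q})\cap\Cal F^\ell\subset\tcf^\ell$: apply the map of Lemma \ref{lem3.3} to $\partial^*_{\frak q}\ph$, obtained by plugging in $k-\ell$ entries from $\frak p/\frak q$; using $\partial^*_{\frak q}\ph = 0$ and the fact that plugging $\frak p/\frak q$ entries commutes appropriately with the differential, I expect the resulting identity to say precisely that $\partial^*_{\frak p}$ applied to the image of $\ph$ under Lemma \ref{lem3.3} vanishes, i.e.\ the image lies in $\ker(\partial^*_{\frak p})$, which is the defining condition for $\tcf^\ell$. The inclusion $\partial^*_{\frak q}(\tcf^\ell)\subset\tcf^\ell$ is a refinement of the same bookkeeping: by the previous two facts $\partial^*_{\frak q}(\tcf^\ell)\subset\Cal F^{\ell-1}$, and one must check the image under Lemma \ref{lem3.3} (into $\La^{\ell-1}\frak p_+\otimes\Bbb V$) lands in $\ker(\partial^*_{\frak p})$; this follows because inserting $\frak p/\frak q$ entries into $\partial^*_{\frak q}\ph$ reproduces, on the $\La^{\ell}\frak p_+\otimes\Bbb V$-part, the action of $\partial^*_{\frak p}$ on the image of $\ph$, which is already a cycle.

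For part (2), the statement is essentially that the map $\pi$ from \eqref{pidef} intertwines $\partial^*_{\frak q}$ with $\partial^*_\rho$. The strategy is to trace through the two-step definition of $\pi$: first restrict/insert $k-\ell$ arguments from $\frak p/\frak q$ (Lemma \ref{lem3.3}), landing in $L(\La^{k-\ell}(\frak p/\frak q),\La^\ell\frak p_+\otimes\Bbb V)$ with values in cycles, then apply the quotient $\ker(\partial^*_{\frak p})\to H_\ell(\frak p_+,\Bbb V)$. On one hand $\pi(\partial^*_{\frak q}\ph)$ is computed by inserting $k-1-\ell$ entries from $\frak p/\frak q$ into $\partial^*_{\frak q}\ph$ and passing to $H_\ell$. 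On the other hand $\partial^*_\rho(\pi\ph)$ uses formula \eqref{eq:codiff} for $\partial^*_\rho$ on $\La^{k-\ell}(\frak q_+/\frak p_+)\otimes H_\ell(\frak p_+,\Bbb V)$, where the ``coefficient action'' $Z\cdot v$ for $Z\in\frak q_+/\frak p_+$ on $v\in H_\ell(\frak p_+,\Bbb V)$ is the residual $\frak q_0$-action. The key computation is to expand $\partial^*_{\frak q}\ph$ via \eqref{eq:codiff}, split each sum according to whether the distinguished entries $Z_i$ (the acted-upon/omitted ones) lie in $\frak p/\frak q$ or in $\frak q_+/\frak p_+ \cong \frak g/\frak p$, and observe: terms where the acted-upon entry $Z_i\in\frak p/\frak q$ contribute, after passing to $H_\ell(\frak p_+,\Bbb V)$, exactly the part of $\partial^*_{\frak p}$-type which dies on homology; terms where $Z_i\in\frak g/\frak p$ reproduce the coefficient-action terms of $\partial^*_\rho$; bracket terms $[Z_i,Z_j]$ with one argument in $\frak p$ land in $\frak p$ (since $\frak p$ is an ideal in $\frak q$, hence $[\frak p,\frak q_+]\subset\frak p_+$, while $[\frak q_+,\frak q_+]$ modulo $\frak p_+$ gives the bracket on $\frak q_+/\frak p_+$); matching signs — which requires a careful but routine reordering of wedge factors when the $\frak p/\frak q$-arguments are pulled to the front — yields the claimed equality up to an overall sign.

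The main obstacle I expect is the sign bookkeeping in part (2): the definition of $\pi$ involves a fixed but somewhat arbitrary choice of how to insert the $k-\ell$ arguments from $\frak p/\frak q$, and the formula \eqref{eq:codiff} for $\partial^*_{\frak q}$ produces terms in which the omitted index may be among those slots or not, so reconciling the two requires a uniform convention (e.g.\ always placing the $\frak p/\frak q$-arguments in the last $k-\ell$ slots and computing the Koszul sign of the resulting permutation). A secondary subtlety is verifying that the coefficient action in $\partial^*_\rho$ really is induced by the $\frak q_0$-action on $H_\ell(\frak p_+,\Bbb V)$ via the isomorphism $L(\La^{k-\ell}(\frak p/\frak q), H_\ell(\frak p_+,\Bbb V))\cong \La^{k-\ell}(\frak q_+/\frak p_+)\otimes H_\ell(\frak p_+,\Bbb V)$, i.e.\ that the Killing-form duality between $\frak q_+/\frak p_+$ and $\frak p/\frak q$ is compatible with the brackets appearing on both sides; this is where the decomposition \eqref{g-decomp} and the fact that $\frak p_+$ is an ideal in $\frak q_+$ are used. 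Once these conventions are fixed, the rest is a finite check over the types of terms in \eqref{eq:codiff}, which I would present compactly rather than term by term.
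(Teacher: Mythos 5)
Your overall strategy is the right one and is essentially the paper's: split $\partial^*_{\frak q}$ according to whether wedge factors come from $\frak p_+$ or from a complement, show that the piece lowering the $\frak p_+$--degree is $\pm\id\otimes\partial^*_{\frak p}$ (so it dies on homology and detects $\tcf^\ell$), and identify the other piece with $\partial^*_\rho$ after passing to $H_\ell(\frak p_+,\Bbb V)$. But the execution has a genuine problem: you carry out the computation in the picture of multilinear maps $(\frak g/\frak q)^k\to\Bbb V$ and describe $\partial^*_{\frak q}$ by terms in which an \emph{argument} acts on the value and \emph{brackets of arguments} are inserted. That is the Chevalley--Eilenberg cohomology differential (the operator denoted $\partial$ in the paper, which is only the \emph{adjoint} of $\partial^*$ and raises degree), not the homology differential $\partial^*_{\frak q}$. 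The latter is given by \eqref{eq:codiff} on wedge products of elements of $\frak q_+$, and under the duality $\frak q_+\cong(\frak g/\frak q)^*$ it is expressed through the cobracket $\frak g/\frak q\to\La^2(\frak g/\frak q)$ dual to $[\ ,\ ]\colon\La^2\frak q_+\to\frak q_+$, not through brackets of arguments. Accordingly, several of your supporting claims are wrong or do not typecheck: the entries $Z_i$ in \eqref{eq:codiff} lie in $\frak q_+$ and cannot ``lie in $\frak p/\frak q$''; $\frak q_+/\frak p_+$ is dual to $\frak p/\frak q$ and is not isomorphic to $\frak g/\frak p$; $\frak p$ is not an ideal in $\frak q$ (the inclusion goes the other way), and $[\frak p,\frak q_+]\not\subset\frak p_+$. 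The facts that actually drive the proof are $[\frak q_+,\frak p_+]\subset[\frak p,\frak p_+]\subset\frak p_+$ (because $\frak p_+$ is the nilradical of $\frak p$) and that $\frak q_+\cap\frak p_0$ is a Lie subalgebra.

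The paper sidesteps all of this by choosing the $\frak q_0$--invariant splitting $\frak q_+=(\frak q_+\cap\frak p_0)\oplus\frak p_+$ and the induced bigrading $\La^{(r,s)}\frak q_+\otimes\Bbb V=\La^r(\frak q_+\cap\frak p_0)\otimes\La^s\frak p_+\otimes\Bbb V$. One checks that $\Cal F^\ell_k=\oplus_{s\geq\ell}\La^{(k-s,s)}\frak q_+\otimes\Bbb V$ and that the projection of Lemma \ref{lem3.3} extracts the $(k-\ell,\ell)$--component; formula \eqref{eq:codiff}, applied to a decomposable element $Z_1\wedge\dots\wedge Z_r\wedge W_1\wedge\dots\wedge W_s\otimes v$, then visibly produces only bidegrees $(r-1,s)$ and $(r,s-1)$, the $(r,s-1)$--part being $(-1)^r\id\otimes\partial^*_{\frak p}$ and the $(r-1,s)$--part being built from the natural action of $\frak q_+\cap\frak p_0$ on $\La^s\frak p_+\otimes\Bbb V$, which on $\ker(\partial^*_{\frak p})/\im(\partial^*_{\frak p})$ is exactly the $\frak q_+/\frak p_+$--module structure entering $\partial^*_\rho$. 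All three claims of (1) and the identity in (2) are then immediate bidegree bookkeeping. If you insist on the multilinear--map picture you must first transport \eqref{eq:codiff} through the Killing--form duality (producing contractions against dual bases and the cobracket), at which point you are doing the same computation in heavier notation; as written, your computation proves statements about the wrong operator.
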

\begin{proof}
To prove the statement, we use the decomposition $\frak q_+=(\frak
q_+\cap\frak p_0)\oplus\frak p_+$ from Section \ref{2.1}, noting that
the second summand is an ideal in $\frak q_+$, while the first summand
is a Lie subalgebra isomorphic to $\frak q_+/\frak p_+$. This gives
rise to a bigrading on $C_*(\frak q_+,\Bbb V)$ as
$$ \La^k\frak q_+\otimes\Bbb V=\oplus_{r+s=k}\La^r(\frak q_+\cap\frak
p_0)\otimes\La^s\frak p_+\otimes\Bbb V=:\oplus_{r+s=k}\La^{(r,s)}\frak
q_+\otimes\Bbb V.
$$ 

Since $\frak p_+$ is the annihilator of $\frak p$ in $\frak q$, we see
that $\Cal F^\ell_k=\oplus_{s\geq\ell}\La^{(k-s,s)}\frak
q_+\otimes\Bbb V$ for $k\geq\ell$, and that the projection from Lemma
\ref{lem3.3} corresponds to extracting the component in
$\La^{(k-\ell,\ell)}\frak q_+\otimes\Bbb V$.

Now consider a decomposable element of $\La^{(r,s)}\frak
q_+\otimes\Bbb V$, which we denote as
\begin{equation}\label{elem}
Z_1\wedge\dots\wedge Z_r\wedge W_1\wedge\dots\wedge W_s\otimes v, 
\end{equation}
so the $Z$'s are in $\frak q_+\cap\frak p_0$ and the $W$'s are in
$\frak p_+$. Moreover, the bracket of two $W$'s and the bracket of a
$Z$ with a $W$ lies in $\frak p_+$, while the bracket of two $Z$'s lies
in $\frak q_+\cap\frak p_0$.

Now the formula for the Lie algebra homology differential (compare
with \eqref{eq:codiff}) immediately implies that $\partial^*_{\frak
  q}$ maps this element to the sum of the components with bidegrees
$(r-1,s)$ and $(r,s-1)$. On the one hand, this implies that
$\partial^*_{\frak q}(\Cal F^\ell_k)\subset\Cal F^{\ell-1}_{k-1}$.

On the other hand, we see that this construction makes $C_*(\frak
q_+,\Bbb V)$ into a double complex, and we denote by $\partial^*_1$
and $\partial^*_2$ the two components of $\partial^*_{\frak q}$. Then
  we obtain the usual relations $(\partial^*_i)^2=0$ for $i=1,2$ and
  $\partial^*_1\partial^*_2=-\partial^*_2\partial^*_1$. From the
  definition it also follows that the component of degree $(r,s-1)$
  consists of the summands in which a $W$ acts on $v$ and the summands
  containing the brackets of two $W$'s. This easily implies that
  $\partial^*_2=(-1)^r\id\otimes\partial^*_{\frak p}$. 

From the construction in Section \ref{3.3} it is also clear that
$$
\tcf^\ell_k \subset\Cal F^\ell_k\cong\oplus _{\ell\leq s\leq
  k}\La^{(k-s,s)}\frak q_+\otimes\Bbb V
$$ 
exactly consists of those elements, for which the component in
$\La^{(k-\ell,\ell)}\frak q_+\otimes\Bbb V$ lies in the kernel of
$\id\otimes\partial^*_{\frak p}$. This readily implies the first
statement in (1) as well as $\partial^*_{\frak
  q}(\tcf^{\ell}_k)\subset\Cal F^\ell_{k-1}$. Moreover, an element
$\ph\in\tcf^\ell_k$ can be written as $\ph=\ph_1+\ph_2$ with
$\partial^*_2(\ph_1)=0$ and $\ph_2\in\Cal F^{\ell+1}_k$. But then
$\partial^*_{\frak q}(\ph)$ is congruent to
$\partial^*_1(\ph_1)+\partial_2^*(\ph_2)$ modulo $\Cal
F^{\ell+1}_{k-1}$. Since both these summands lie in
$\ker(\partial^*_2)$ we conclude that $\partial^*_{\frak
  q}(\ph)\in\tcf^\ell_{k-1}$, which completes the proof of (1).

\medskip

Returning to the decomposable element of $\La^{(r,s)}\frak
q_+\otimes\Bbb V$ from \eqref{elem}, we next observe that there is a
natural action of the Lie algebra $\frak q_+\cap\frak p_0$ on
$\La^s\frak p_+\otimes\Bbb V$, which we denote by
$\bullet$. Explicitly, $Z\bullet(W_1\wedge\dots\wedge W_j\otimes
v)$ is given by 
$$ 
W_1\wedge\dots\wedge W_j\otimes Z\cdot v
+\textstyle\sum_iW_1\wedge\dots\wedge[Z,W_i]\wedge\dots\wedge W_j\otimes v.
$$
This easily implies that, in terms of this action, $\partial^*_1$ maps
the element \eqref{elem} to
\begin{align*}
\textstyle\sum_{i=1}^r&(-1)^iZ_1\wedge\dots\widehat{Z_i}\dots\wedge Z_r\otimes
Z_i\bullet (W_1\wedge\dots\wedge W_j\otimes v)\\
+&\textstyle\sum_{i<j}(-1)^{i+j}[Z_i,Z_j]\wedge
Z_1\wedge\cdots\widehat{Z_i}\cdots\widehat{Z_j}\cdots\wedge Z_r\wedge
W_1\wedge\dots\wedge W_s\otimes v.
\end{align*}
Clearly, $\ker(\partial^*_{\frak p})\subset \La^s\frak p_+\otimes\Bbb
V$ is invariant under the action $\bullet$, and projecting further to
$H_s(\frak p_+,\Bbb V)$ the action $\bullet$ corresponds to the
natural action of $\frak q_+/\frak p_+$. The definition of
$\partial^*_\rho$ then implies the statement in (2).
\end{proof}

Using this, we can now construct an explicit map relating absolute and
relative homology groups. Consider the intersection
$\ker(\partial^*_{\frak q})\cap\Cal F^\ell_k$, which is a $\frak
q$--submodule in $C_k(\frak q_+,\Bbb V)$. By part (1) of Proposition
\ref{prop3.4}, this is contained in $\tcf^\ell_k$ so $\pi$ is defined
on this subspace. By part (2) of Proposition \ref{prop3.4}, this
restriction has values in $\ker(\partial^*_\rho)\subset
\La^{k-\ell}(\frak q_+/\frak p_+)\otimes H_\ell(\frak p_+,\Bbb
V)$. Hence we can postcompose with the canonical map to relative
homology to obtain a $\frak q$--equivariant map
\begin{equation}\label{Pidef}
\Pi:\ker(\partial^*_{\frak q})\cap\Cal F^\ell_k\to 
H_{k-\ell}(\frak q_+/\frak p_+,H_\ell(\frak p_+,\Bbb V)).
\end{equation}

Using this, we can formulate our final result. 
 
\begin{thm}\label{thm3.4}
For $\ell\leq k$, the map $\Pi$ vanishes on $\im(\partial^*_{\frak q})\cap\Cal
  F^\ell_k$ and descends to a $\frak q$--equivariant surjection
$$ 
H_k(\frak q_+,\Bbb V)\to H_{k-\ell}(\frak q_+/\frak
  p_+,H_\ell(\frak p_+,\Bbb V)).
$$
\end{thm}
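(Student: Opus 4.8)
Throughout I would work with the bigrading $\La^k\frak q_+\otimes\Bbb V=\bigoplus_{r+s=k}\La^r(\frak q_+\cap\frak p_0)\otimes\La^s\frak p_+\otimes\Bbb V$ from the proof of Proposition \ref{prop3.4}, with the two components $\partial^*_1$ (lowering $r$) and $\partial^*_2=(-1)^r\id\otimes\partial^*_{\frak p}$ (lowering $s$) of $\partial^*_{\frak q}$, together with Kostant's Hodge decomposition for $\frak p_+$, cf.\ Lemma \ref{lem2.4}: $\La^s\frak p_+\otimes\Bbb V=\im(\partial^*_{\frak p})\oplus\ker(\square_{\frak p})\oplus\im(\partial_{\frak p})$, where the quotient map onto $H_s(\frak p_+,\Bbb V)$ restricts to an isomorphism on $\ker(\square_{\frak p})$. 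Two observations are used repeatedly. Since all three Hodge projections are $\frak p_0$--equivariant and $\partial^*_1$ touches the coefficient factor $\La^s\frak p_+\otimes\Bbb V$ only through the $\frak p_0$--action, $\partial^*_1$ commutes with them; and on elements with coefficients in $\ker(\square_{\frak p})$ the operator $\partial^*_1$ agrees, under $\frak q_+\cap\frak p_0\cong\frak q_+/\frak p_+$ and $\ker(\square_{\frak p})\cong H_s(\frak p_+,\Bbb V)$, with the relative codifferential $\partial^*_\rho$ -- this is Proposition \ref{prop3.4}(2) read on harmonic coefficients. Recall also that for $\ph\in\tcf^\ell_k$ the map $\pi$ simply extracts the bidegree $(k-\ell,\ell)$--component and projects its coefficients into $H_\ell(\frak p_+,\Bbb V)$.

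To prove the vanishing statement, let $\ps=\partial^*_{\frak q}\chi\in\Cal F^\ell_k$; being a boundary it is a cycle, hence lies in $\tcf^\ell_k$ by Proposition \ref{prop3.4}(1), so $\Pi(\ps)$ is defined. Writing $\chi=\sum_s\chi_s$ in bidegrees and comparing bidegree $(k-\ell,\ell)$ in $\partial^*_{\frak q}\chi$ shows $\ps_\ell=\partial^*_1\chi_\ell+\partial^*_2\chi_{\ell+1}$. The term $\partial^*_2\chi_{\ell+1}$ has coefficients in $\im(\partial^*_{\frak p})$, so it dies under the projection to $H_\ell(\frak p_+,\Bbb V)$, and $\pi(\ps)$ is the image of $\partial^*_1\chi_\ell$. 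Decomposing $\chi_\ell$ in the coefficient factor by the Hodge decomposition and using that $\partial^*_1$ preserves the summands: the $\im(\partial^*_{\frak p})$--part of $\partial^*_1\chi_\ell$ is killed by the projection; the $\im(\partial_{\frak p})$--part vanishes, because $\partial^*_1\chi_\ell=\ps_\ell-\partial^*_2\chi_{\ell+1}$ has coefficients in $\ker(\partial^*_{\frak p})$ and $\ker(\partial^*_{\frak p})\cap\im(\partial_{\frak p})=0$ by disjointness (Lemma \ref{lem2.4}); and the $\ker(\square_{\frak p})$--part is $\partial^*_1$ of the $\ker(\square_{\frak p})$--part $\chi_\ell'$ of $\chi_\ell$, which by the second observation above is $\partial^*_\rho$ of the image of $\chi_\ell'$ in $\La^{k+1-\ell}(\frak q_+/\frak p_+)\otimes H_\ell(\frak p_+,\Bbb V)$. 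Hence $\pi(\ps)\in\im(\partial^*_\rho)$, i.e.\ $\Pi(\ps)=0$.

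For surjectivity, let $\bar\ph$ be a $\partial^*_\rho$--cycle in $\La^{k-\ell}(\frak q_+/\frak p_+)\otimes H_\ell(\frak p_+,\Bbb V)$ and lift it, via $\ker(\square_{\frak p})\cong H_\ell(\frak p_+,\Bbb V)$, to $\ph_0\in\La^{k-\ell}(\frak q_+\cap\frak p_0)\otimes\ker(\square_{\frak p})\subset\La^{(k-\ell,\ell)}\frak q_+\otimes\Bbb V$. Since its coefficients lie in $\ker(\square_{\frak p})\subset\ker(\partial^*_{\frak p})$ we get $\partial^*_2\ph_0=0$, and $\partial^*_1\ph_0$ corresponds to $\partial^*_\rho\bar\ph=0$; thus $\ph_0\in\ker(\partial^*_{\frak q})\cap\Cal F^\ell_k$, and $\pi(\ph_0)=\bar\ph$ by construction, so $\Pi(\ph_0)=[\bar\ph]$. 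As every class is of this form, $\Pi$ is onto. Finally, by the vanishing statement $\Pi$ factors through the image of $\ker(\partial^*_{\frak q})\cap\Cal F^\ell_k$ in $H_k(\frak q_+,\Bbb V)$; this image is a $\frak q$--submodule and, $H_k(\frak q_+,\Bbb V)$ being completely reducible, a direct summand, so one obtains the asserted $\frak q$--equivariant surjection onto $H_{k-\ell}(\frak q_+/\frak p_+,H_\ell(\frak p_+,\Bbb V))$.

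The step I expect to need the most care is the vanishing statement -- precisely the bookkeeping that $\Pi(\partial^*_{\frak q}\chi)$ does not see $\chi_{\ell+1}$ nor the non--harmonic parts of $\chi_\ell$. This is where the failure of $\Cal F^\bullet$ to be $\partial^*_{\frak q}$--stable is felt: one cannot replace $\chi$ by an element of $\tcf^\ell$ and quote Proposition \ref{prop3.4}(2), but must extract the harmonic contribution of $\chi$ by hand using the $\frak p_+$--Hodge decomposition.
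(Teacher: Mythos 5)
Your proof is correct, but it takes a genuinely different route from the paper's. The paper argues through weights: after reducing to $\Bbb V$ complex irreducible, it produces, for each irreducible component of the target with lowest weight $-w_1\cdot w_2\cdot\la$ ($w_1\in W^{\frak q}_{\frak p}$, $w_2\in W^{\frak p}$), the Kostant lowest weight vector of $H_k(\frak q_+,\Bbb V)$ for $w=w_1w_2$, observes that as a wedge of root vectors it sits in bidegree $(k-\ell,\ell)$ and hence in $\Cal F^\ell_k\setminus\Cal F^{\ell+1}_k$, and then uses the multiplicity--one statements of Theorem \ref{thm2.9} and of Kostant's theorem to see that $\Pi$ maps it to a lowest weight vector of the corresponding component; the vanishing on $\im(\partial^*_{\frak q})\cap\Cal F^\ell_k$ is deduced by contradiction, again from multiplicity one. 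You instead run a direct homological computation in the double complex, splitting the coefficient factor $\La^s\frak p_+\otimes\Bbb V$ by the \emph{absolute} Hodge decomposition for $\frak p_+\subset\frak g$ (Kostant's original version --- note that Lemma \ref{lem2.4} as stated is the relative one and does not literally cover this case) and exploiting that $\partial^*_1$ commutes with the Hodge projections because it acts on the coefficients only through the $\frak p_0$--action. The individual steps check out: $\ps_\ell=\partial^*_1\chi_\ell+\partial^*_2\chi_{\ell+1}$ is the correct bidegree bookkeeping, disjointness kills the $\im(\partial_{\frak p})$--part, and in the surjectivity argument $\partial^*_1\ph_0=0$ does follow from $\partial^*_\rho\bar\ph=0$ because the coefficient projection is injective on $\ker(\square_{\frak p})$. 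What your approach buys: it is constructive (an explicit harmonic representative for every class), proves the vanishing directly rather than by contradiction, and nowhere uses regularity of the infinitesimal character or irreducibility of $\Bbb V$. What the paper's approach buys: it identifies exactly which irreducible summand of $H_k(\frak q_+,\Bbb V)$ surjects onto which summand of the target, which is the information needed to see that the maps $\Pi$ for varying $\ell$ jointly realize the isomorphism of Theorem \ref{thm3.3}. Your explicit use of complete reducibility to extend the map from the subquotient $(\ker(\partial^*_{\frak q})\cap\Cal F^\ell_k)/(\im(\partial^*_{\frak q})\cap\Cal F^\ell_k)$ to all of $H_k(\frak q_+,\Bbb V)$ addresses a point the paper leaves implicit.
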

\begin{proof}
Via complexifications and direct sums, it suffices to prove this in
the case that the Lie algebras are complex and that $\Bbb V$ is a
complex irreducible representation of $\frak g$. Fix an irreducible
component $\Bbb W$ of $H_\ell(\frak p_+,\Bbb V)$. If $-\la$ is the
lowest weight of $\Bbb V$, then the lowest weight of $\Bbb W$ must be
$-w_2\cdot\la$ for an element $w_2\in W^{\frak p}$ of length
$\ell$. By Theorem \ref{thm2.9}, the $\frak q$--representation
$H_*(\frak q_+/\frak p_+,\Bbb W)$ is completely reducible with each
irreducible component occurring with multiplicity one. Moreover, the
lowest weights of these components are exactly the weights $-w_1\cdot
w_2\cdot \la$ for $w_1\in W^{\frak q}_{\frak p}$.

From Proposition \ref{prop3.2}, we know that $w:=w_1w_2$ lies in
$W^{\frak q}$ and denoting by $k$ its length, we see that $w_1$ has
length $k-\ell$. By Kostant's theorem, $H_k(\frak q_+,\Bbb V)$ contains
an irreducible representation of $\frak q$ with lowest weight
$-w\cdot\la$, and we can consider the corresponding lowest weight
vector $\ph\in\La^k\frak q_+\otimes\Bbb V$. This is the wedge product
of root vectors $e_\al\in\frak g_{\al}$ for each $\al\in\Ph_w$
tensorized with a certain weight vector of $\Bbb V$. Further,
$\Ph_w\subset\Delta^+(\frak q_+)$ and
$\Ph_{w_1}=\Ph_w\cap\Delta^+(\frak p_0)$, and this has $k-\ell$
elements. Hence in the language of the proof of Proposition
\ref{prop3.4}, the lowest weight vector is contained in
$\La^{(k-\ell,\ell)}\frak q_+\otimes\Bbb V$, so it lies in $\Cal
F^\ell_k$ but not in $\Cal F^{\ell+1}_k$. Of course, it also lies in
$\ker(\partial^*_{\frak q})$.

Hence $\ph$ has non--trivial image in the quotient $\Cal
F^\ell_k/\Cal F^{\ell+1}_k$ and its image under the map from
\eqref{eq:assoc} is a decomposable element of $\La^{k-\ell}(\frak
q_+/\frak p_+)\otimes\ker(\partial^*_\rho)$. The second component of
this is a weight vector and by the multiplicity--one result in
Kostant's theorem (applied to $H_*(\frak p_+,\Bbb V)$), our element
has to have nontrivial image in $\La^{k-\ell}(\frak q_+/\frak
p_+)\otimes H_\ell(\frak p_+,\Bbb V)$. More precisely, this image must
be the tensor product of a decomposable element of $\La^{k-\ell}(\frak
q_+/\frak p_+)$ with the lowest weight vector for the irreducible
component $\Bbb W$ from above. By part (2) of Proposition
\ref{prop3.4} our element lies in $\ker(\partial^*_\rho)$ and by the
multiplicity--one part of Theorem \ref{thm2.9}, it must be a harmonic
element contained in the component in the homology corresponding to
$w_1$. This shows that $\Pi(\ph)$ is a lowest weight vector for the
irreducible component in question, so $\Pi$ is surjective.

To complete the proof, it remains to show that $\Pi$ vanishes on
$\im(\partial^*_{\frak q})$. But if $\Pi$ would be non--zero on that
subspace, then the image of $\Pi|_{\im(\partial^*_{\frak q})}$ would
contain one of the $\frak q_0$--irreducible components of $H_*(\frak
q_+/\frak p_+,H_\ell(\frak p_+,\Bbb V))$. But the lowest weights of
each of these components is a lowest weight of an irreducible
component of $H_*(\frak q_+,\Bbb V)$, so a weight vector of that
weight cannot be contained in $\im(\partial^*_{\frak q})$ by the
multiplicity--one part of Kostant's theorem.
\end{proof}

%%%%%%%%%%%%%%%%%%%%%%%%%%%%%%%

\begin{bibdiv}
\begin{biblist}

\bib{BGG}{article}{
   author={Bern{\v{s}}te{\u\i}n, I. N.},
   author={Gel{\cprime}fand, I. M.},
   author={Gel{\cprime}fand, S. I.},
   title={Differential operators on the base affine space and a study of
   ${\germ g}$-modules},
   conference={
      title={Lie groups and their representations},
      address={Proc. Summer School, Bolyai J\'anos Math. Soc., Budapest},
      date={1971},
   },
   book={
      publisher={Halsted, New York},
   },
   date={1975},
   pages={21--64},
   review={\MR{0578996 (58 \#28285)}},
}

\bib{Bott}{article}{
   author={Bott, Raoul},
   title={Homogeneous vector bundles},
   journal={Ann. of Math. (2)},
   volume={66},
   date={1957},
   pages={203--248},
   issn={0003-486X},
   review={\MR{0089473 (19,681d)}},
}

\bib{Calderbank--Diemer}{article}{
   author={Calderbank, David M. J.},
   author={Diemer, Tammo},
   title={Differential invariants and curved Bernstein-Gelfand-Gelfand
   sequences},
   journal={J. Reine Angew. Math.},
   volume={537},
   date={2001},
   pages={67--103},
   issn={0075-4102},
   review={\MR{1856258 (2002k:58048)}},
}

\bib{twistor}{article}{
   author={{\v{C}}ap, Andreas},
   title={Correspondence spaces and twistor spaces for parabolic geometries},
   journal={J. Reine Angew. Math.},
   volume={582},
   date={2005},
   pages={143--172},
   issn={0075-4102},
   review={\MR{2139714 (2006h:32017)}},
   doi={10.1515/crll.2005.2005.582.143},
}

\bib{book}{book}{
   author={{\v{C}}ap, Andreas},
   author={Slov{\'a}k, Jan},
   title={Parabolic geometries I. Background and general theory},
   series={Mathematical Surveys and Monographs},
   volume={154},
   publisher={American Mathematical Society},
   place={Providence, RI},
   date={2009},
   pages={x+628},
   isbn={978-0-8218-2681-2},
   review={\MR{2532439 (2010j:53037)}},
}

\bib{CSS-BGG}{article}{
   author={{\v{C}}ap, Andreas},
   author={Slov{\'a}k, Jan},
   author={Sou{\v{c}}ek, Vladim{\'{\i}}r},
   title={Bernstein-Gelfand-Gelfand sequences},
   journal={Ann. of Math.},
   volume={154},
   date={2001},
   number={1},
   pages={97--113},
   issn={0003-486X},
   review={\MR{1847589 (2002h:58034)}},
}

\bib{part2}{article}{
   author={{\v{C}}ap, Andreas},
   author={Sou{\v{c}}ek, Vladim{\'{\i}}r},
   title={Relative BGG sequences; II.~BGG machinery and invariant
     operators}, 
   eprint={arXiv:1510.03986},
}

\bib{Cartier}{article}{
   author={Cartier, P.},
   title={Remarks on ``Lie algebra cohomology and the generalized Borel-Weil
   theorem'', by B. Kostant},
   journal={Ann. of Math. (2)},
   volume={74},
   date={1961},
   pages={388--390},
   issn={0003-486X},
   review={\MR{0142698 (26 \#267)}},
}

\bib{Kostant}{article}{
   author={Kostant, Bertram},
   title={Lie algebra cohomology and the generalized Borel-Weil theorem},
   journal={Ann. of Math. (2)},
   volume={74},
   date={1961},
   pages={329--387},
   issn={0003-486X},
   review={\MR{0142696 (26 \#265)}},
}

\bib{LR}{article}{
   author={Landsberg, Joseph M.},
   author={Robles, Colleen},
   title={Fubini-Griffiths-Harris rigidity and Lie algebra cohomology},
   journal={Asian J. Math.},
   volume={16},
   date={2012},
   number={4},
   pages={561--586},
   issn={1093-6106},
   review={\MR{3004278}},
   doi={10.4310/AJM.2012.v16.n4.a1},
}

\bib{Lepowsky}{article}{
   author={Lepowsky, J.},
   title={A generalization of the Bernstein-Gelfand-Gelfand resolution},
   journal={J. Algebra},
   volume={49},
   date={1977},
   number={2},
   pages={496--511},
   issn={0021-8693},
   review={\MR{0476813 (57 \#16367)}},
}

\bib{users-guide}{book}{
   author={McCleary, John},
   title={A user's guide to spectral sequences},
   series={Cambridge Studies in Advanced Mathematics},
   volume={58},
   edition={2},
   publisher={Cambridge University Press, Cambridge},
   date={2001},
   pages={xvi+561},
   isbn={0-521-56759-9},
   review={\MR{1793722 (2002c:55027)}},
}

\end{biblist}
\end{bibdiv}

\end{document}